\DeclareMathOperator{\End}{End}
\def\cyr{%
\renewcommand\rmdefault{wncyr}%
\renewcommand\sfdefault{wncyss}%
\renewcommand\encodingdefault{OT2}%
\normalfont
\selectfont}
\DeclareMathAlphabet{\zap}{OT1}{pzc}{m}{it}
\DeclareTextFontCommand{\textcyr}{\cyr}
\def\be{\begin{equation}}
\def\ee{\end{equation}}
\def\bea{\begin{eqnarray*}}
\def\eea{\end{eqnarray*}}
\def\CC{\mathbb C}
\newtheorem{main}{Theorem}
\DeclareMathOperator{\Vol}{Vol}
\newtheorem{thm}{Theorem}
\newtheorem{lem}{Lemma}
\newtheorem{prop}{Proposition}
\def\RR{{\mathbb R}}
\def\CP{{\mathbb C \mathbb P}}
\begin{document}

\title{Einstein Metrics, Harmonic Forms,\\ and Conformally K\"ahler Geometry}

\author{Claude LeBrun\\Stony Brook University}

\date{}
\maketitle

\begin{abstract}
The author has elsewhere given a complete
 classification of the compact oriented Einstein $4$-manifolds
that   satisfy $W^+(\omega , \omega )> 0$ for some self-dual harmonic $2$-form $\omega$,
where $W^+$ denotes the self-dual Weyl curvature.
In this article,      similar results  are obtained  when  $W^+(\omega , \omega )\geq  0$, 
provided  the  self-dual 
harmonic $2$-form 
$\omega$  is   transverse to the zero section of  $\Lambda^+\to M$.
 However, this  
 transversality condition   plays an 
essential role in the story; dropping it leads one into   wildly different
territory where  entirely different phenomena predominate. 
\end{abstract}

\section{Introduction}

Recall that  a Riemannian metric $h$ is said to be {\em Einstein} \cite{bes} if it has  
 constant Ricci curvature, or in other words if  it solves the Einstein equation
\begin{equation}
\label{einsteineq}
r= \lambda h
\end{equation}
for some real number $\lambda$,  where $r$ is the Ricci tensor of $h$. When this happens, 
$\lambda$ is called the {\em Einstein constant} of $h$, and of course has the same sign as  the Einstein metric's scalar curvature. 

Dimension four  seems to represent a sort of ``Goldilocks zone''  for the Einstein equation. 
In lower dimensions, Einstein metrics are extremely rigid, in the sense that 
they necessarily have constant sectional curvature, and so do not really exhibit any interesting local differential geometry. 
In higher dimensions, on the other hand, 
 they are extremely   flexible, existing in such profusion  on  familiar manifolds \cite{bohm,bgk,cvc}  that their local geometry 
 seems to offer little clue as to 
 the identity  of the manifold where they reside. 
 By contrast, dimension four seems ``just right'' for \eqref{einsteineq}, 
 as   four-dimensional Einstein metrics exhibit a well-tempered  combination of local
 flexibility and global rigidity that often makes their geometry  
 perfectly reflect  the manifold on which they live. 
 For example, if 
  $M$ is a compact real or complex-hyperbolic $4$-manifold, a $4$-torus, or $K3$, 
the  moduli space of Einstein metrics on $M$ is known explicitly, and moreover  turns out  to be
 {connected} \cite{bes,bcg,lmo}.

Unfortunately, however, we do not have a similarly complete understanding of  the moduli space of  Einstein metrics
on most of the  $4$-manifolds where  this moduli space is non-empty. An important  family of test-cases is provided  by the {\em Del Pezzo surfaces}, 
here  understood  to  mean 
the smooth compact oriented $4$-manifolds that support complex structures  with ample anti-canonical 
line bundle.   Up to diffeomorphism, there are
exactly ten such manifolds, namely  $S^2 \times S^2$ and the nine connected sums $\CP_2\# m\overline{\CP}_2$,  $m = 0, 1, \ldots, 8$. 
These 
$4$-manifolds are  completely characterized \cite{chenlebweb} by  two properties:   
they admit Einstein metrics with $\lambda > 0$, and they also admit symplectic structures. However, 
it is  currently unclear whether  the  known Einstein metrics on these spaces sweep out  the entire
Einstein moduli space. 
One of our main objectives here will be to generalize and  strengthen a  characterization of the known Einstein metrics
on Del Pezzo surfaces   previously proved by the  author in \cite{lebcake}.

In order to formulate our results, first recall that 
the bundle  $\Lambda^2\to M$ of 2-forms on  an
oriented Riemannian $4$-manifold $(M,h)$  invariantly  decomposes 
as the Whitney sum
\begin{equation} 
\Lambda^2 = \Lambda^+ \oplus \Lambda^- 
\label{deco} 
\end{equation}
 of the eigenspaces of 
the Hodge star 
operator
$\star: \Lambda^2 \to \Lambda^2$. 
Sections of the $(+1)$-eigenbundle $\Lambda^+$ are   called {self-dual 
2-forms}, while  the  
sections of the $(-1)$-eigenbundle $\Lambda^-$    are  called {anti-self-dual
2-forms}. The decomposition \eqref{deco} is moreover {\em conformally invariant},
meaning that it unchanged by multiplying the metric by an arbitrary positive function.


One  important consequence of 
the decomposition \eqref{deco} is  that it induces an invariant  decomposition of
the Riemann curvature tensor $\mathcal{R}$ into simpler pieces. 
Indeed, if  we  identify   the Riemannian curvature tensor 
with  the  self-adjoint endomorphism 
${\mathcal R} : \Lambda^2 \to \Lambda^2$
of the $2$-forms defined by 
$$\varphi_{ab} \longmapsto \frac{1}{2} {{\mathcal R}^{cd}}_{ab} \varphi_{cd}$$
and known as  the  {\em curvature operator}, then 
(\ref{deco}) allows us to decompose $\mathcal{R}$  into irreducible pieces 
\begin{equation}
\label{curv}
{\mathcal R}=
\left(
\mbox{
\begin{tabular}{c|c}
&\\
$W^++\frac{s}{12}$&$\stackrel{\circ}{r}$\\ &\\
\cline{1-2}&\\
$\stackrel{\circ}{r}$ & $W^-+\frac{s}{12}$\\&\\
\end{tabular}
} \right) ,
\end{equation}
where $s$ denotes the {scalar curvature},
$\stackrel{\circ}{r}=r-\frac{s}{4}g$ is the   
     trace-free  Ricci curvature, and where the remaining pieces 
$W^\pm$, known as the
{\em self-dual} and  {\em anti-self-dual}  Weyl tensors, 
 are the trace-free parts  of the  endomorphisms of $\Lambda^\pm$ induced by  $\mathcal{R}$.  
Remarkably enough, the corresponding pieces  ${(W^\pm)^a}_{bcd}$  of the 
Riemann  curvature tensor are both {\em conformally invariant} ---  they remain unaltered if  the
metric is multiplied by an arbitrary smooth positive function. 

Now  let  $(M,h)$ be a {\em  compact} oriented
Riemannian  $4$-manifold.
The  Hodge theorem then tells us that every deRham class
on $M$ has a unique harmonic representative. In particular,  there is a canonical isomorphism  
$$H^2(M,{\mathbb R}) =\{ \varphi \in \Gamma (\Lambda^2) ~|~
d\varphi = 0, ~ d\star \varphi =0 \} .$$
However, since the Hodge star operator $\star$ defines an involution of 
the right-hand side, we obtain 
 a direct-sum decomposition
\begin{equation}
	H^2(M, {\mathbb R}) = {\mathcal H}^+_{h}\oplus {\mathcal H}^-_{h},
	\label{harm}
\end{equation}
where
$${\mathcal H}^\pm_{h}= \{ \varphi \in \Gamma (\Lambda^\pm) ~|~
d\varphi = 0\} $$
are the spaces of self-dual and anti-self-dual harmonic forms.
Since the conditions of being closed and belonging to $\Lambda^\pm$  are both 
conformally invariant, it follows that  the spaces ${\mathcal H}^\pm$ are 
both conformally invariant, too. 
Moreover, the dimensions $b_\pm = \dim \mathcal{H}^\pm$ of these spaces are completely
metric-independent,  and  can easily be  shown to  be oriented homotopy invariants 
of the $4$-manifold $M$. 

%
%

Now, if $(M,h)$ is a compact oriented Riemannian $4$-manifold, and if $\omega\in \mathcal{H}^+$ is a fixed
self-dual harmonic $2$-form, the quantity
$$W^+(\omega , \omega ) := \langle W^+(\omega ) , \omega \rangle = \frac{1}{4} (W^+)^{abcd} \omega_{ab} \omega_{cd}$$
transforms in an extremely simple manner under conformal rescaling; namely, if we change our metric 
by 
$$h\rightsquigarrow u^2h$$
for some positive function $u$, then the quantity in question changes by 
$$
W^+(\omega , \omega )\rightsquigarrow u^{-6}W^+(\omega , \omega ).
$$
In particular, the {\em sign} of this quantity at a given point is unchanged by 
conformal rescalings. This makes this hybrid measure of curvature particularly compelling
when $b_+(M)=1$, because in this case there is, up to a non-zero constant factor,  only one non-trivial choice of $\omega$,
and the sign of $W^+(\omega , \omega )$ at each point then becomes a natural global conformal invariant of $(M,h)$.

The main result of \cite{lebcake} was that if a compact $4$-dimensional Einstein manifold satisfies
\begin{equation}
\label{delphi}
W^+(\omega , \omega ) > 0
\end{equation}
for some self-dual harmonic $2$-form $\omega$, then $(M,h)$ is one of the known Einstein metrics
on some Del Pezzo surface. Conversely, the known Einstein metrics on Del Pezzo surfaces all 
have this property. Combining these two observations then shows, as a corollary, that the known 
Einstein metrics on these spaces always sweep out a connected component of the moduli space.
Here it is worth noting that every Del Pezzo surface has $b_+=1$, so that condition \eqref{delphi} 
represents a rather natural characterization of the known Einstein metrics on these
$4$-manifolds. 

On the other hand, since condition \eqref{delphi} trivially  implies that both $W^+$ and $\omega$ are nowhere zero,
it might seem desirable to relax this overly-stringent condition by merely requiring that $W^+(\omega, \omega )$
be non-negative. What we will show here is that this can indeed be done, provided one imposes
an interesting and  natural condition on  the $2$-form. Namely,  if $\omega$ is a harmonic self-dual $2$-form on 
a compact oriented Riemannian $4$-manifold  $(M,h)$, one says that  $\omega$ is  {\em near-symplectic} 
if its graph  is  transverse to the zero section of the 
 rank-$3$ vector  bundle $\Lambda^+\to M$.  This is a {\em generic} condition, as  has 
 come to be understood through the work of 
  Taubes \cite{taubesnear1,taubesnear} and others  \cite{koho,lcp2,perutz}; 
 indeed, on any smooth compact oriented $4$-manifold with $b_+\neq 0$, the set of metrics admitting  a
 near-symplectic self-dual harmonic $2$-form is  open and dense.
 Of course, a dimension count immediately reveals  that the zero locus of a near-symplectic self-dual harmonic $2$-form  
$\omega$ on $(M,h)$ is automatically a (possibly empty) finite disjoint union $Z$ of circles:
\begin{equation}
\label{loco}
Z\approx \sqcup _{j=1}^n S^1.
\end{equation}
Imposing this reasonable assumption on  the behavior of  $\omega$  will  actually allow us to prove some natural generalizations of
the main result of \cite{lebcake}. 
More specifically, here are the  main results of the present article: 

\begin{main} 
\label{diamond}
Let $(M,h)$ be a compact oriented Einstein $4$-manifold that  carries  a near-symplectic self-dual harmonic $2$-form $\omega$ such that 
\begin{equation}
\label{rosette}
W^+(\omega , \omega ) \geq 0, \qquad  W^+(\omega , \omega )\not\equiv 0.
\end{equation}
Then $W^+(\omega , \omega )>0$ everywhere,  $M$ is diffeomorphic to a Del Pezzo surface, and $h$ is conformally related to 
a positive-scalar-curvature  extremal  K\"ahler metric $g$ on $M$ with K\"ahler form $\omega$.  
Conversely, every Del Pezzo surface admits an Einstein metric $h$ satisfying \eqref{rosette}  for a   self-dual harmonic $2$-form $\omega$
that is nowhere zero (and hence near-symplectic).
\end{main}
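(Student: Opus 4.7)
The overall strategy is to reduce Theorem~\ref{diamond} to the main result of \cite{lebcake}, which already gives the desired conclusion under the strict inequality $W^+(\omega,\omega)>0$.  The new work is therefore to upgrade the weaker hypothesis \eqref{rosette} to strict positivity everywhere on $M$, with the near-symplecticity assumption being used to tame the zero locus $Z$ of $\omega$, which by \eqref{loco} is a disjoint union of embedded circles.

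On the open set $M\setminus Z$, the plan is to exploit two structural facts: the standard Weitzenb\"ock identity for the harmonic self-dual form $\omega$,
\[
\nabla^{*}\nabla\,\omega \;=\; 2\,W^{+}(\omega) - \tfrac{s}{3}\,\omega,
\]
together with the divergence-free condition $\delta W^{+}=0$, which is automatic on Einstein 4-manifolds via the second Bianchi identity; note also that $s$ is constant.  Differentiating $\psi:=W^{+}(\omega,\omega)$ and feeding in both of the above identities should yield a refined Bochner inequality for $\psi$ on $M\setminus Z$ of the form
\[
\Delta\,\psi^{2/3} \;\leq\; C\,\psi^{2/3},
\]
with $C$ bounded in terms of $|W^{+}|$ and the Einstein constant --- this is essentially the same type of computation that underlies Derdzi\'nski's theorem on conformally-K\"ahler Einstein 4-manifolds.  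Since $\psi\geq 0$ with $\psi\not\equiv 0$, the strong maximum principle then forces $\psi > 0$ on $M\setminus Z$.  Strict positivity of $\psi$ feeds back into the algebra of $W^{+}$ to place $\omega$ in the top eigenspace of $W^{+}$ with the remaining two eigenvalues coinciding, and Derdzi\'nski's theorem then exhibits the conformally rescaled metric $g$ proportional to $|\omega|_{h}^{-2}\,h$ as a positive-scalar-curvature extremal K\"ahler metric on $M\setminus Z$ with K\"ahler form a constant multiple of $\omega$.

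The principal obstacle, and what I expect to be the hardest step, is to rule out $Z\neq\emptyset$.  Because $\omega$ is near-symplectic, $|\omega|_{h}$ vanishes linearly along each circle of $Z$, so the K\"ahler metric $g$ blows up at a definite rate transverse to $Z$, while the extremal structure lives only on the complement.  My plan is to combine the Taubes normal form for a near-symplectic zero circle with a global integral identity for the extremal K\"ahler structure ---  a Chern--Weil or Gauss--Bonnet computation for $(M\setminus Z, g)$, an evaluation of a Futaki-type invariant of the extremal gradient field, or a Stokes-type computation on a tubular $\varepsilon$-neighborhood of $Z$ --- and to extract a contradiction from the boundary contributions picked up near $Z$.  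Conceptually, the contradiction should express that a positive-scalar-curvature extremal K\"ahler metric on the complement of a non-empty link of circles in a smooth compact 4-manifold cannot arise from a smooth Einstein metric via the displayed conformal rescaling.

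Once $Z=\emptyset$ and $\psi>0$ everywhere, the main theorem of \cite{lebcake} applies directly and yields the diffeomorphism to a Del Pezzo surface together with the conformal relationship to an extremal K\"ahler metric having K\"ahler form $\omega$.  For the converse, one exhibits, for each of the ten Del Pezzo diffeomorphism types, an Einstein metric $h$ with the required nowhere-zero self-dual harmonic form: the K\"ahler--Einstein metric where it exists, and the Page and Chen--LeBrun--Weber conformally K\"ahler Einstein metrics in the remaining cases of $\CP_{2}\#\overline{\CP}_{2}$ and $\CP_{2}\#2\overline{\CP}_{2}$.  In every case the K\"ahler form of the associated extremal K\"ahler metric rescales to a nowhere-zero self-dual harmonic 2-form $\omega$ for $h$, and a direct pointwise computation gives $W^{+}(\omega,\omega) > 0$, proportional up to a positive conformal weight to the squared scalar curvature of the underlying K\"ahler metric.
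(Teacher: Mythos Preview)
Your overall architecture---upgrade \eqref{rosette} to strict positivity, then invoke \cite{lebcake}---matches the paper's, but both of your two key technical steps are left as hopes rather than arguments, and the paper carries them out by quite different means.

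First, the differential inequality $\Delta\psi^{2/3}\leq C\psi^{2/3}$ is asserted but not derived; the paper does not proceed this way.  Instead it works in the conformally rescaled almost-K\"ahler metric $g=2^{-1/2}|\omega|_h\,h$ on $X=M\setminus Z$ (note: your conformal factor $|\omega|_h^{-2}$ is off; the correct rescaling makes $|\omega|_g\equiv\sqrt{2}$).  The Weitzenb\"ock identity applied is the one for $fW^+$ with $f=2^{1/4}|\omega|_h^{-1/2}$, contracted against $\omega\otimes\omega$ and integrated over $X$.  The near-symplectic hypothesis enters precisely here: the boundary terms from integration by parts on $\{|\omega|_h\geq\epsilon\}$ are $O(\epsilon^{-3/2})\cdot\Vol^{(3)}(\partial X_\epsilon)$, and near-symplecticity gives $\Vol^{(3)}(\partial X_\epsilon)=O(\epsilon^2)$, so they vanish in the limit.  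The resulting integral identity, combined with a pointwise almost-K\"ahler computation and an elementary $3\times 3$ matrix inequality, forces either $W^+\equiv 0$ or $g$ K\"ahler on $X$ with $s=\mathbf{c}/f>0$.  No maximum principle is used.

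Second, and more seriously, your plan for ruling out $Z\neq\emptyset$ is a list of tools (Chern--Weil, Futaki, Stokes near $Z$) with no indication of how any would produce a contradiction.  This is exactly the step where the paper's argument is most delicate, and it uses none of those tools.  From the K\"ahler structure on $X$ one obtains $|W^+|_h=\mathbf{a}\,|\omega|_h^{3/2}$ for a positive constant $\mathbf{a}$, which extends by continuity to $M$.  Near-symplecticity means $|\omega|_h$ vanishes to first order along $Z$, so $|W^+|_h$ vanishes to order $3/2$; in particular $\nabla W^+=0$ along $Z$.  But differentiating the $3/2$-power relation gives $|d|W^+|_h|\geq \mathbf{A}\,|\omega|_h^{1/2}$ near $Z$, while $\nabla W^+$ (being $C^1$ and vanishing on $Z$) satisfies $|\nabla W^+|_h\leq \mathbf{B}\,|\omega|_h$.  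These combine to $|\omega|_h\geq(\mathbf{A}/\mathbf{B})^2>0$ near $Z$, contradicting $Z\neq\emptyset$.  This regularity/scaling argument is the crux of the theorem and is entirely absent from your proposal.
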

 
 \medskip
 
\begin{main} \label{pearl}
Let $(M,h)$ be a compact oriented $\lambda \geq 0$ Einstein $4$-manifold   that carries  a near-symplectic self-dual harmonic $2$-form $\omega$ such that 
\begin{equation}
\label{cosette}
W^+(\omega , \omega ) \geq 0
\end{equation}
everywhere.
Then $\omega$ is nowhere zero,   and $h$ is conformally related to 
an extremal  K\"ahler metric $g$ on $M$  with K\"ahler form $\omega$.  Moreover, $M$ is diffeomorphic to a 
Del Pezzo surface, a $K3$ surface, an Enriques surface, an Abelian surface, or a hyper-elliptic surface. 
Conversely, each of these  complex surfaces admits a  $\lambda \geq 0$   Einstein metric $h$   satisfying \eqref{cosette} for a   self-dual harmonic $2$-form $\omega$
that is nowhere zero (and hence near-symplectic).
\end{main}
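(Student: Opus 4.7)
My plan is to reduce the theorem to the already-established Theorem~\ref{diamond} by a dichotomy on whether $W^+(\omega,\omega)$ vanishes identically; the two cases correspond respectively to the Del~Pezzo and Calabi--Yau branches of the classification. If $W^+(\omega,\omega) \not\equiv 0$, then all hypotheses of Theorem~\ref{diamond} are satisfied, and it immediately delivers the Del~Pezzo conclusion: $\omega$ is nowhere zero, and $h$ is conformal to a positive-scalar-curvature extremal K\"ahler metric $g$ with K\"ahler form $\omega$. Thus the only new work lies in the complementary case $W^+(\omega,\omega) \equiv 0$.

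In that case I would invoke the standard Weitzenb\"ock identity $\nabla^*\nabla\omega = 2W^+(\omega) - \tfrac{s}{3}\omega$ for the self-dual harmonic $2$-form $\omega$; pairing with $\omega$ and integrating over $M$ yields $\int_M|\nabla\omega|^2 = -\tfrac{s}{3}\int_M|\omega|^2$. Since $s = 4\lambda \geq 0$ while the left-hand side is non-negative, both sides must vanish. Because $\omega$ is near-symplectic it is not identically zero, so $\int_M |\omega|^2 > 0$ forces $s=0$ (so $h$ is Ricci-flat), and the left side then forces $\nabla\omega \equiv 0$. A parallel nonzero self-dual $2$-form has constant, nonzero pointwise norm, so $\omega$ is nowhere vanishing and, after normalization, defines a parallel $h$-orthogonal almost complex structure $J$; thus $(M,h,J)$ is a Ricci-flat K\"ahler surface, which is trivially extremal K\"ahler with $g=h$. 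The Enriques--Kodaira classification then identifies $M$ as one of $K3$, Enriques, Abelian, or hyperelliptic, exhausting the minimal K\"ahler surfaces of Kodaira dimension zero.

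For the converse, the Del~Pezzo case is supplied by the converse direction of Theorem~\ref{diamond}. For $K3$ and Abelian surfaces, Yau's solution of the Calabi conjecture produces Ricci-flat K\"ahler metrics; for Enriques and hyperelliptic surfaces, the Ricci-flat K\"ahler metric on the finite \'etale cover ($K3$ or Abelian, respectively) descends, since the deck transformations act by holomorphic isometries. In each of these four cases the K\"ahler form $\omega$ is itself a nowhere-vanishing parallel self-dual harmonic $2$-form, and the standard K\"ahler identity for $W^+$ acting on $\omega$ gives $W^+(\omega,\omega) = s/3 = 0 \geq 0$, verifying the hypothesis.

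The bulk of the analytic and geometric difficulty is concentrated in Theorem~\ref{diamond}; for Theorem~\ref{pearl} the key step is the short integrated Bochner argument above. The virtue of integrating the Weitzenb\"ock identity over the closed manifold is that the $\lambda \geq 0$ hypothesis globally forces $\omega$ to be parallel, so no delicate local analysis near a hypothetical zero circle is ever required, and the near-symplectic assumption enters only to guarantee that $\omega$ is not identically zero.
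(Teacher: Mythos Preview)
Your argument is correct and follows essentially the same route as the paper's proof: split into the case handled by Theorem~\ref{diamond} and the remaining case, then run the integrated Weitzenb\"ock formula for $\omega$ together with $\lambda\ge 0$ to force $\nabla\omega=0$ and $\lambda=0$, and classify the resulting Ricci-flat K\"ahler surface. The only cosmetic difference is that the paper first invokes Theorem~\ref{clarify} to reduce the second branch to $W^+\equiv 0$, whereas you work directly from $W^+(\omega,\omega)\equiv 0$; since the Bochner step only needs the integral of $W^+(\omega,\omega)$ to vanish, your slightly weaker hypothesis suffices and the arguments are effectively identical.
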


 \medskip
  
\begin{main} \label{opal}
The  near-symplectic hypothesis in Theorem \ref{diamond} is essential:    counter-examples 
show that the result fails without this assumption. 
\end{main}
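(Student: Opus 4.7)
My plan is to exhibit an explicit counter-example drawn from the $\lambda<0$ regime, where the form $\omega$ in question will fail to be near-symplectic in a very clean way. The construction takes $M=\Sigma_1\times\Sigma_2$, the Riemannian product of two closed hyperbolic surfaces of genera $g_1,g_2\geq 2$ normalized to have constant Gaussian curvature $-1$; the product metric $h$ is then K\"ahler-Einstein with $\lambda=-1$ and constant scalar curvature $s\equiv -4$. The underlying $4$-manifold is a complex surface of general type, and in particular is not diffeomorphic to any Del Pezzo surface.

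To produce $\omega$, I exploit the fact that $p_g(M)=g_1 g_2>0$. Choosing a decomposable holomorphic $2$-form $\sigma=p_1^{*}\alpha\wedge p_2^{*}\beta\in H^0(M,K_M)$ built from nonzero abelian differentials $\alpha,\beta$, I set $\omega:=\mathrm{Re}\,\sigma$. By K\"ahler Hodge theory, $\omega$ is a real self-dual harmonic $2$-form, and pointwise lies in the rank-two subbundle $\mathrm{Re}\,\Lambda^{2,0}\subset\Lambda^+$ orthogonal to the K\"ahler form $\omega_K$. A local computation in holomorphic coordinates (writing $\sigma=f\,dz\wedge dw$) shows that $\omega$ vanishes precisely on the zero divisor of $\sigma$ --- here a disjoint union of fiber curves $\{\alpha=0\}\times\Sigma_2\,\cup\,\Sigma_1\times\{\beta=0\}$, a real two-dimensional set. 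Hence $\omega$ is decidedly not near-symplectic.

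To verify the sign condition \eqref{rosette}, I use the standard decomposition of $W^+$ on a K\"ahler $4$-manifold: it preserves the splitting $\Lambda^+=\mathbb{R}\,\omega_K\oplus\mathrm{Re}\,\Lambda^{2,0}$ and acts as multiplication by $s/6$ on the first summand and by $-s/12$ on the second, as follows from $W^+(\omega_K)=(s/6)\omega_K$ together with the trace-free property of $W^+$. Since $s\equiv -4<0$, this gives
\[
W^+(\omega,\omega)\;=\;-\tfrac{s}{12}\,|\omega|^2\;=\;\tfrac{1}{3}\,|\omega|^2\;\geq\;0,
\]
with equality exactly where $\omega$ vanishes; in particular $W^+(\omega,\omega)\not\equiv 0$. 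Yet neither conclusion of Theorem~\ref{diamond} can hold: $M$ is not a Del Pezzo, and $\omega$ --- having zeros --- cannot be a constant multiple of the K\"ahler form of any extremal K\"ahler metric in the conformal class of $h$.

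The argument contains no serious obstacle; the only nontrivial input is the standard eigenvalue decomposition of $W^+$ on a K\"ahler $4$-manifold, together with the elementary observation that a nonzero holomorphic $2$-form on a surface of general type must vanish somewhere. The conceptual message is that, once the transversality hypothesis is dropped, the zero locus of $\omega$ can be a complex curve of real codimension two rather than a disjoint union of circles, and this completely changes the analytic geometry near the zero set, short-circuiting the estimates that underlie Theorem~\ref{diamond}.
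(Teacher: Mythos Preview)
Your proof is correct and follows essentially the same strategy as the paper's: both take a K\"ahler--Einstein surface with $\lambda<0$ and $p_g>0$, set $\omega=\Re(\varphi)$ for a nonzero holomorphic $2$-form, and use the K\"ahler eigenvalue decomposition of $W^+$ to get $W^+(\omega,\omega)=-\tfrac{s}{12}|\omega|^2\geq 0$, with the zero set of $\omega$ a complex curve of real codimension two rather than a union of circles. The only difference is the specific example: the paper uses an Aubin--Yau metric (e.g.\ on a smooth quintic in $\CP_3$), while you take the product of two hyperbolic surfaces, which has the minor advantage that the K\"ahler--Einstein metric is the explicit product metric and no existence theorem is needed.
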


 \medskip
 
 The proofs of these main results can be found  \S\ref{demedici} below, following the proofs, in  \S\S\ref{diana}--\ref{demilo}, of  the     technical results 
that  underpin  these theorems.

\section{An Integral Weitzenb\"ock Formula}
\label{diana}

Let $(M,h)$ be a compact oriented Riemannian $4$-manifold with 
{\em harmonic self-dual Weyl curvature}, in the sense that $\delta W^+:= -\nabla \cdot W^+=0$.
 When $h$ is Einstein, this property automatically holds,  by virtue of  of the second Bianchi identity. 
 We  will further   assume throughout   that $h$ is at least $C^4$. The latter assumption is of course  innocuous in the Einstein case,
as  elliptic regularity for \eqref{einsteineq} implies   that Einstein metrics  are always \cite{det-kaz}  real-analytic in harmonic coordinates.

\pagebreak

We  will henceforth also assume that $b_+(M)\neq 0$. This is equivalent to saying that $(M,h)$ 
 admits a self-dual harmonic $2$-form $\omega\not\equiv 0$. We now choose some such 
 form, and regard it as fixed for the remainder of the discussion.  Let $Z\subset M$ denote the zero set of $\omega$.
 Since $\omega$ is self-dual by assumption, 
 $$\omega \wedge \omega = \omega \wedge \star \omega = |\omega|^2_h d\mu_h,$$
 and it therefore follows  that 
$\omega$ is actually  a symplectic form  on the open set  $X:=M-Z$ where $\omega$ is non-zero. 
Moreover, the Riemannian metric  $g$ on $X$ defined by 
 $g={2}^{-1/2}|\omega|_h h$ is then an {\em almost-K\"ahler metric}, in the sense that $g$ is related to 
 the symplectic form $\omega$ by $g= \omega (\cdot , J \cdot )$ for a unique almost-complex structure $J$ on $X$.

Let us now re-express the relationship between the conformal relationship between our two metrics as 
$$h=f^2g,$$
where  $f=2^{1/4}|\omega|_h^{-1/2}$. The fact that  $h$ satisfies $\delta W^+=0$ then implies \cite{pr2} that 
$g$ satisfies $\delta (fW^+)=0$. Since our assumptions imply that $g$  
 is also at least $C^4$, we therefore have   \cite{derd,G1,lebcake,pr2} the Weitzenb\"ock formula
\begin{equation}
\label{initio}
0 = \nabla^*\nabla (fW^+)+ \frac{s}{2} fW^+ - 6 fW^+\circ W^+ + 2 f|W^+|^2 I 
\end{equation}
for $fW^+$, which for notational simplicity has been represented here as  a trace-free section of $\End (\Lambda^+)$,
while  $s$ and $\nabla$  respectively  denote the scalar curvature 
and Levi-Civita connection of our almost-K\"ahler metric  $g$ on $X$.

Our strategy  is now to contract \eqref{initio}  with $\omega\otimes \omega$, integrate on $X=M-Z$,
and then try to integrate by parts in order to throw the Bochner Laplacian  $\nabla^*\nabla$ onto $\omega\otimes \omega$.
In order to accomplish this, we first exhaust $X$ by domains $X_\epsilon$ with smooth boundary, 
where $X_\epsilon$ is the region where $|\omega|_h\geq \epsilon$, where 
$\epsilon > 0$ is any regular value of the smooth non-negative function $|\omega|_h : X\to \RR$.
Integrating  by parts on $X_\epsilon$ then has the following effect:

\begin{lem}
\label{ready}
There is a constant $C$, independent of $\epsilon \in (0,1)$,  but depending  on $(M,h,\omega)$,  such that 
$$\left|
\int_{X_\epsilon} \left[ \langle \nabla^*\nabla (fW^+) , \omega \otimes \omega  \rangle - \langle fW^+,    \nabla^*\nabla (\omega \otimes \omega ) \rangle 
\right] d\mu_g
\right|
\leq C \epsilon^{-3/2} \Vol^{(3)} ( \partial X_\epsilon , h),
$$
where all terms in the integral on the left are computed with respect to $g$, but where 
the $3$-dimensional boundary  volume on  the right is computed with respect to  $h$.
\end{lem}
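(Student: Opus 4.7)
The strategy is to convert the integral on the left into a boundary integral over $\partial X_\epsilon$ via integration by parts in the metric $g$, and then to estimate the integrand pointwise using the conformal relation $h=f^2g$ and the near-symplecticity of $\omega$. For smooth tensor fields $T$ and $S$ on $X_\epsilon$, a direct product-rule computation gives the divergence identity
$\langle\nabla^*\nabla T,S\rangle - \langle T,\nabla^*\nabla S\rangle = -\operatorname{div}_g V$,
where $V$ is the $g$-dual of the $1$-form $X\mapsto\langle\nabla^g_X T,S\rangle - \langle T,\nabla^g_X S\rangle$. Applying this to $T=fW^+$ and $S=\omega\otimes\omega$ on $X_\epsilon$ (where $g$ is smooth because $\omega\neq 0$) and invoking the divergence theorem reduces the lemma to bounding the boundary integral $\int_{\partial X_\epsilon}\bigl(|T|_g\,|\nabla^g_\nu S|_g + |\nabla^g_\nu T|_g\,|S|_g\bigr)\,dA_g$, where $\nu$ is the $g$-unit outward normal.

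On $\partial X_\epsilon$ one has $|\omega|_h=\epsilon$ and therefore $f = 2^{1/4}\epsilon^{-1/2}$. Since $\omega$ is the K\"ahler form of the almost-K\"ahler metric $g$, one has $|\omega|_g \equiv \sqrt{2}$ and hence $|S|_g\equiv 2$; and since $g^{ij}=f^2h^{ij}$, any $(0,k)$-tensor $A$ satisfies $|A|_g = f^k|A|_h$, which combined with the conformal invariance of $W^+$ as a $(1,3)$-tensor yields $|fW^+|_g = f^3|W^+|_h = O(\epsilon^{-3/2})$. The covariant derivatives are treated via the standard formula for the conformal change of connection, which exhibits $\nabla^g-\nabla^h$ as an algebraic operation whose coefficients are of size $O(|d\log f|_g) = O(|df|_h)$, so the whole estimate ultimately reduces to controlling $|df|_h$ along $\partial X_\epsilon$.

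This is where the near-symplectic hypothesis enters, and it constitutes the main obstacle. Because $\omega$ is transverse to the zero section of $\Lambda^+$, in local coordinates near any point of $Z$ one has $|\omega|_h^2 = \Theta(|x'|^2)$ where $x'$ are the coordinates transverse to $Z$, so $|d|\omega|_h^2|_h \leq C\,|\omega|_h$ on a neighborhood of $Z$. Consequently $|d|\omega|_h|_h = |d|\omega|_h^2|_h/(2|\omega|_h)$ is uniformly bounded near $Z$, and from the identity $df = -\tfrac{f}{2|\omega|_h}\,d|\omega|_h$ one deduces $|df|_h = O(\epsilon^{-3/2})$ on $\partial X_\epsilon$. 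Plugging this back into the formulas above gives $|\nabla^g(\omega\otimes\omega)|_g = O(\epsilon^{-3/2})$ and $|\nabla^g(fW^+)|_g = O(\epsilon^{-3})$, so the entire boundary integrand is $O(\epsilon^{-3})$ in the $g$-measure. Combining this with $dA_g = f^{-3}dA_h = O(\epsilon^{3/2})dA_h$ on the $3$-dimensional hypersurface $\partial X_\epsilon$ yields the desired bound $C\epsilon^{-3/2}\Vol^{(3)}(\partial X_\epsilon,h)$; without near-symplecticity, $|d|\omega|_h|_h$ could blow up at $Z$ and this last estimate would break.
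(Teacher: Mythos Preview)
Your overall strategy---reduce to a boundary integral via the divergence theorem for $g$, then estimate pointwise using the conformal dictionary between $g$ and $h$---is exactly the paper's approach, and your bookkeeping of powers of $f$ and $\epsilon$ is correct.

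However, there is a genuine conceptual error. This lemma carries \emph{no} near-symplectic hypothesis: at this stage of the paper $\omega$ is merely a non-trivial self-dual harmonic $2$-form, and the near-symplectic assumption is introduced only in the \emph{next} lemma, where it is used to control $\Vol^{(3)}(\partial X_\epsilon,h)$. Your claim that transversality is what guarantees $|d|\omega|_h|_h$ stays bounded near $Z$, and that ``without near-symplecticity, $|d|\omega|_h|_h$ could blow up at $Z$,'' is simply false. The bound you need is the Kato inequality
\[
\bigl|\,d|\omega|_h\,\bigr|_h \;\le\; |\nabla^h\omega|_h,
\]
valid wherever $\omega\neq 0$, and the right-hand side is uniformly bounded on the compact manifold $M$ regardless of how $\omega$ vanishes. (Indeed, your own computation $|d|\omega|_h^2|_h \le 2|\nabla^h\omega|_h\,|\omega|_h$ already proves this without any transversality.) The paper invokes Kato explicitly and obtains constants of the form $C_1=\sqrt[4]{2}\bigl[11(\max_M|W^+|_h)(\max_M|\nabla^h\omega|_h)+2\max_M|\nabla^h W^+|_h\bigr]$, etc., depending only on $(M,h,\omega)$.

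So your proof goes through once you replace the appeal to near-symplecticity by the Kato inequality; but as written you have both imported a hypothesis that is not present in the statement and misidentified the role that hypothesis plays in the overall argument.
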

\begin{proof} By the divergence version of Stokes' theorem, we  have
\begin{eqnarray*}  \int_{X_\epsilon} \langle \nabla^*\nabla (fW^+) , \omega \otimes
\omega \rangle d\mu_g  &=&  \int_{X_\epsilon} \langle -\nabla\cdot \nabla fW^+  , \omega \otimes \omega \rangle d\mu_g
\\ &=& - \int_{X_\epsilon}  \nabla\cdot\langle  \nabla fW^+  , \omega \otimes \omega \rangle  d\mu_g
\\ && \qquad +  \int_{X_\epsilon}\langle \nabla fW^+  , \nabla (\omega \otimes \omega ) \rangle d\mu_g
\\ &=& -  \int_{\partial X_\epsilon} \langle  \nabla _\nu  fW^+  , \omega \otimes \omega \rangle  d{\zap a}_g 
\\ && \qquad +  \int_{X_\epsilon}\langle \nabla fW^+  , \nabla (\omega \otimes \omega ) \rangle d\mu_g  
\\ &=& -  \int_{\partial X_\epsilon}  \nabla _\nu \langle   fW^+  , \omega \otimes \omega \rangle  d{\zap a}_g \\ && \qquad +  \int_{\partial X_\epsilon} \langle    fW^+  , \nabla _\nu (\omega \otimes \omega )\rangle  d{\zap a}_g
 \\ && \qquad \qquad +  \int_{X_\epsilon}  \nabla\cdot  \langle  fW^+  , \nabla (\omega \otimes \omega ) \rangle d\mu_g 
 \\ && \qquad \qquad \qquad +  \int_{X_\epsilon}  \langle  fW^+  ,  -\nabla\cdot \nabla (\omega \otimes \omega ) \rangle d\mu_g 
 \\ &=& -  \int_{\partial X_\epsilon}  \nabla _\nu \langle   fW^+  , \omega \otimes \omega \rangle  d{\zap a}_g 
 \\&& \qquad + 2 \int_{\partial X_\epsilon} \langle    fW^+  , \nabla _\nu (\omega \otimes \omega )\rangle  d{\zap a}_g
  \\ && \qquad\qquad+ \int_{X_\epsilon}  \langle  fW^+  ,  \nabla^\ast \nabla (\omega \otimes \omega ) \rangle d\mu_g  
  \\ &=& -  \int_{\partial X_\epsilon}  \nabla _\nu [  fW^+  (\omega , \omega )]  d{\zap a}_g \\&& \qquad  + 4 \int_{\partial X_\epsilon}  fW^+  (\omega, \nabla _\nu \omega) d{\zap a}_g
  \\ && \qquad\qquad+ \int_{X_\epsilon}  \langle  fW^+  ,  \nabla^\ast \nabla (\omega \otimes \omega ) \rangle d\mu_g  
\end{eqnarray*} 
where $\nu$ is the outward-pointing unit normal of $\partial X_\epsilon$ with respect to $g$, and where $d{\zap a}_g$ is the $g$-induced  volume $3$-form 
 on the boundary. Here, every term is thus understood to be computed with respect to $g$.

We now estimate the boundary integrals by first re-expressing them in terms of the original metric  $h=f^2g$. For  emphasis  and clarity, we will  temporarily
use  $\hat{\nu} = f^{-1}\nu$  to denote the unit normal of $\partial X_\epsilon$
with respect to $h$, and  $\hat{\nabla}$ to denote the Levi-Civita connection of $h$, which  differs from the Levi-Civita connection of $\nabla$ of $g$ by 
$$\delta^a_b\beta_c+\delta_c^a\beta_b - \beta_d h^{da} h_{bc},$$
where $\beta = d\log f= -\frac{1}{2}d\log |\omega|_h$.
In other cases where the meaning of a term depends on a choice of metric, we will indicate the metric used by means of a
subscript; for example, since 
 index-raising is needed to define $W^+(\omega, \omega )$, one has 
$$[W^+(\omega , \omega )]_g = f^6[W^+(\omega , \omega )]_h.$$
With these conventions in hand, we thus  have \begin{eqnarray*}
\left| \int_{\partial X_\epsilon}  \nabla _\nu [  fW^+  (\omega , \omega )]_g  da_g \right|&=& 
 \left|\int_{\partial X_\epsilon} f \nabla _{\hat{\nu}} [  f^{7}W^+  (\omega , \omega )]_h  f^{-3} da_h\right| \\
&\leq & 7 \left| \int_{\partial X_\epsilon}    f^{4} (\nabla _{\hat{\nu}} f)[W^+  (\omega , \omega )]_h da_h\right| 
\\&& \qquad +  \left| \int_{\partial X_\epsilon}  f^{5} \nabla _{\hat{\nu}}[W^+  (\omega , \omega )]_h   da_h\right| 
\\&\leq& 7 \left| \int_{\partial X_\epsilon}    f^{5} |f^{-1}d  f|_h |W^+|_h |\omega|_h^2 da_h\right| 
\\&& \qquad 
+   \left| \int_{\partial X_\epsilon}  f^{5} |\hat{\nabla} W^+|_h  |\omega|_h^2     da_h\right| 
\\&&\qquad \qquad+2  \left| \int_{\partial X_\epsilon}  f^{5} | W^+|_h |\omega|_h |\hat{\nabla} \omega|_h     da_h\right| 
\\&=& 7 \left| \int_{\partial X_\epsilon} 2^{1/4} |\omega|_h^{-3/2}  |d|\omega|_h|_h |W^+|_h  da_h\right| 
\\&& \qquad +   \left| \int_{\partial X_\epsilon}  2^{5/4} |\omega|_h^{-1/2}  |\hat{\nabla} W^+|_h      da_h\right| 
\\&&\qquad \qquad +2  \left| \int_{\partial X_\epsilon}  2^{5/4} |\omega|_h^{-3/2}   |W^+|_h |\hat{\nabla} \omega|_h     da_h\right| 
\\&\leq & C_1 \epsilon^{-3/2} \Vol^{(3)} ( \partial X_\epsilon , h), 
\end{eqnarray*} 
where $C_1 = \sqrt[4]{2}\left[ {11}(\max_M |W^+|_h) (\max_M |\hat{\nabla}\omega|_h ) +2\max_M |\hat{\nabla} W^+|_h\right]$.
(In the last step, we have used the   Kato inequality $|d|\omega||\leq |\hat{\nabla} \omega|$,  and have remembered that $\epsilon < 1$
by hypothesis.)
Similarly, 
\begin{eqnarray*} \left| \int_{\partial X_\epsilon}  fW^+  (\omega, \nabla _\nu \omega)_g d{\zap a}_g \right| &=& 
 \left| \int_{\partial X_\epsilon}  f \cdot f^{6} W^+  (\omega, {\nabla} _{f\hat{\nu}} \omega)_h f^{-3} d{\zap a}_h \right|
\\&=&  \left| \int_{\partial X_\epsilon}  f^{5} W^+  (\omega, {\nabla} _{\hat{\nu}} \omega)_h  d{\zap a}_h \right|
\\&\leq &  2\left| \int_{\partial X_\epsilon}  f^{5} |W^+|_h  |\omega|_h |{\nabla}  \omega|_h d{\zap a}_h \right|
\\&\leq &  2\left| \int_{\partial X_\epsilon}  f^{5} |W^+|_h  |\omega|_h |\hat{\nabla}  \omega|_h d{\zap a}_h \right|
\\&& \qquad +  6\left| \int_{\partial X_\epsilon}  f^{5} |W^+|_h  |\omega|_h^2 |\beta|_h d{\zap a}_h \right|
\\&= &  2\left| \int_{\partial X_\epsilon}  f^{5} |W^+|_h  |\omega|_h |\hat{\nabla}  \omega|_h d{\zap a}_h \right|
\\&& \qquad +  3\left| \int_{\partial X_\epsilon}  f^{5} |W^+|_h  |\omega|_h |d|\omega|_h|_h d{\zap a}_h \right|
\\&\leq &  5\left| \int_{\partial X_\epsilon} 2^{5/4} |\omega|_h^{-3/2} |W^+|_h  |\hat{\nabla}  \omega|_h d{\zap a}_h \right|
\\&\leq & C_2 \epsilon^{-3/2} \Vol^{(3)} ( \partial X_\epsilon , h), 
\end{eqnarray*} 
where $C_2= 10\sqrt[4]{2}(\max_M |W^+|_h) (\max_M |\hat{\nabla}\omega|_h )$. 
Setting $C=C_1+4C_2$, and referring back to our integration-by-parts calculation, we thus see that the
claim now follows immediately from the triangle inequality. 
\end{proof}

So far, we have only assumed that $\omega$ is a non-trivial self-dual harmonic 
form on $(M,h)$. However, the information we have just gleaned becomes much more useful 
when $\omega$  happens to be near-symplectic:

\begin{lem}
\label{primed}
Let $\omega$ be a near-symplectic self-dual harmonic $2$-form on  a compact oriented
Riemannian $4$-manifold. Let $X=M-Z$ be the complement of the zero set
$Z$ of $\omega$,  set $f=2^{1/4}|\omega|_h^{-1/2}$ on $X$, and let $g=f^{-2}h$ be the almost-K\"ahler metric on 
$(X,\omega)$ obtained by conformally rescaling $h$ to make $|\omega|_g\equiv \sqrt{2}$. Then  
\begin{equation}
\label{party}
\int_{X} \langle \nabla^*\nabla (fW^+) , \omega \otimes \omega  \rangle  ~d\mu_g =  \int_{X} \langle fW^+,    \nabla^*\nabla (\omega \otimes \omega ) \rangle 
~ d\mu_g ,
\end{equation}
 where the integrands on both sides are  defined  with respect to $g$, and where both moreover  belong to $L^1$. In particular, 
 both integrals are finite, and may  be treated  either  as  improper Riemann integrals or as Lebesgue  integrals. 
\end{lem}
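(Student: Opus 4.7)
My plan is to take a limit $\epsilon\to 0$ in Lemma \ref{ready} along a carefully chosen sequence of regular values of $|\omega|_h$. The two things to verify are that the boundary error term in Lemma \ref{ready} vanishes in the limit, and that both integrands belong to $L^1(X,d\mu_g)$, so that dominated convergence on $X_{\epsilon_k}\uparrow X$ yields the equality \eqref{party}.

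For the boundary term, I would first apply Sard's theorem to the smooth function $|\omega|_h : X \to (0,\infty)$ to pick a sequence $\epsilon_k \downarrow 0$ of regular values, ensuring that each $X_{\epsilon_k}$ has smooth boundary as required. The near-symplectic hypothesis then enters crucially: transversality of $\omega$ to the zero section of $\Lambda^+\to M$ forces $Z$ to be the disjoint union of circles \eqref{loco}, and near each such circle $|\omega|_h$ vanishes \emph{linearly} in the $h$-distance $r$ to $Z$. Since $Z$ has codimension $3$, the tube $\partial X_{\epsilon_k}$ is squeezed between two $h$-tubes of radii comparable to $\epsilon_k$, so its $3$-dimensional $h$-volume is $O(\epsilon_k^{2})$. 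The estimate in Lemma \ref{ready} thus becomes $O(\epsilon_k^{1/2})\to 0$.

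For the $L^1$ claim, I would use the Weitzenb\"ock identity \eqref{initio} to rewrite the first integrand pointwise as
$$\left\langle -\tfrac{s}{2}\,fW^+ + 6\,fW^+\circ W^+ - 2f|W^+|^2 I,\ \omega\otimes\omega \right\rangle_g.$$
Converting $g$-quantities back to $h$-quantities via $g=f^{-2}h$ with $f=2^{1/4}|\omega|_h^{-1/2}$, using $d\mu_g = \tfrac{1}{2}|\omega|_h^2\,d\mu_h$ together with the conformal change formula $s_g = f^2 s_h + 6\Delta_g\log f + 6|\nabla_g\log f|_g^{2}$ (the worst singularity being $|\nabla_g\log f|_g^{2}=O(|\omega|_h^{-1})$ near $Z$), a term-by-term estimate produces a pointwise bound of the form (bounded function in $h$)$\,\cdot\,|\omega|_h^{-1/2}\,d\mu_h$. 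Since $r^{-1/2}\cdot r^{2}\,dr$ is integrable at $r=0$, this gives membership in $L^1$. The right-hand integrand of \eqref{party} is handled analogously: expressing $\nabla^g$ in terms of $\nabla^h$ and $d\log f$ yields a formula for $\nabla^{\ast}\nabla(\omega\otimes\omega)$ whose $g$-norm satisfies the same $|\omega|_h^{-1/2}$ integrability rate once paired with $d\mu_g$.

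The main obstacle is precisely this $L^1$ estimate: the conformally rescaled scalar curvature $s_g$ and derivatives of $\log f$ blow up like $|\omega|_h^{-1}$ near $Z$, and one must verify that the linear vanishing of $|\omega|_h$ forced by the near-symplectic hypothesis is exactly enough to absorb these singularities once weighted by the almost-K\"ahler volume $d\mu_g = \tfrac{1}{2}|\omega|_h^2 d\mu_h$. Once both integrals are known to be absolutely convergent, dominated convergence on $X_{\epsilon_k}\uparrow X$, combined with the vanishing boundary error from Lemma \ref{ready}, delivers \eqref{party}.
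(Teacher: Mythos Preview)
Your strategy is exactly the paper's: use the near-symplectic hypothesis to show $\Vol^{(3)}(\partial X_\epsilon,h)=O(\epsilon^2)$, so that Lemma~\ref{ready} forces the boundary discrepancy to be $O(\epsilon^{1/2})\to 0$, and then verify separately that both integrands lie in $L^1(X,d\mu_g)$ so that the improper-integral and Lebesgue interpretations agree. The boundary argument is fine.

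However, your $L^1$ bookkeeping contains a quantitative error. You assert $|\nabla_g\log f|_g^{2}=O(|\omega|_h^{-1})$, but since $|d\log f|_h\sim |\omega|_h^{-1}$ and $g^{-1}=f^2h^{-1}$ with $f^2\sim|\omega|_h^{-1}$, one actually has $|\nabla_g\log f|_g^{2}=f^2|d\log f|_h^2\sim|\omega|_h^{-3}$; the $\Delta_g\log f$ term is of the same order. Consequently $s_g\sim|\omega|_h^{-3}$, not $|\omega|_h^{-1}$, and the worst term $s_g\,f\,W^+(\omega,\omega)_g\,d\mu_g$ scales like $|\omega|_h^{-3}\cdot|\omega|_h^{-1/2}\cdot|\omega|_h^{-1}\cdot|\omega|_h^{2}\,d\mu_h=|\omega|_h^{-5/2}\,d\mu_h$, not $|\omega|_h^{-1/2}\,d\mu_h$. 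This is precisely the rate the paper obtains (by a direct estimate that avoids the Weitzenb\"ock identity altogether), and it is \emph{just barely} integrable near the codimension-$3$ locus $Z$ because $\int_0^1 r^{-5/2}\,r^2\,dr<\infty$. So your argument survives once the exponents are corrected, but the margin is much thinner than your write-up suggests; in particular, this is exactly why the result fails when $Z$ has codimension~$2$, as the paper remarks after Theorem~\ref{opal}.
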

\begin{proof} To say that  $\omega$ is near-symplectic means, by definition, that the section $\omega$ of $\Lambda^+\to M$ is 
transverse to the zero section along its zero locus $Z\approx \sqcup_{j=1}^nS^1$. In particular, the derivative of $\omega$
along $Z$ induces an isomorphism between the normal bundle of $Z\subset M$ and the 
vector bundle $\Lambda^+|_Z\to Z$.  This moreover allows us construct a diffeomorphism between a  sufficiently small tubular neighborhood $\mathcal{U}$ of $Z$
and $Z\times B^3_\varepsilon$, where $B^3_\varepsilon \subset \RR^3$ is the standard $3$-ball of some small radius $\varepsilon$,
by  combining the nearest-point projection $\mathcal{U}\to Z$ with 
the components of $\omega$ relative to  some orthornormal framing of the the  vector bundle $\Lambda^+\to \mathcal{U}$.
(Here, we are using the fact that $\Lambda^+|_{\mathcal{U}}$ is  necessarily trivial because $\Lambda^+$
is oriented, $\mathbf{SO}(3)$ is connected, and 
$\mathcal{U}$ deform retracts to a union of circles.) 
Via this diffeomorphism, the function  $|\omega|_h$ on $\mathcal{U}$ then just becomes the standard radius function on $B^3_\varepsilon$.
Moreover,  after  reducing the size of  $\varepsilon$  if necessary, 
 the Riemannian metric $h$ on $\mathcal{U}$ becomes quasi-isometric to the standard flat product metric $h_0$
on $Z\times B^3_\varepsilon$, in the sense that $h_0/\kappa < h < \kappa h_0$ for some constant
$\kappa > 1$, and where  we have $|\omega|_h \geq \varepsilon$ on the complement $M-\mathcal{U}$ of $\mathcal{U}$. It then follows that
 the hypersurfaces $(\partial X_\epsilon , h)$ are uniformly quasi-isometric to 
$(Z\times S^2_\epsilon, h_0)$, so
 there consequently  exists a positive constant $L=4\pi\kappa$ such that 
$$Vol^{(3)} ( \partial X_\epsilon , h) < L \epsilon^2$$
for all $\epsilon \in (0,\varepsilon )$. Combining this with Lemma \ref{ready} then tells us that 
$$
\left|\int_{X_\epsilon} \left[ \langle \nabla^*\nabla (fW^+) , \omega \otimes \omega  \rangle - \langle fW^+,    \nabla^*\nabla (\omega \otimes \omega ) \rangle 
\right] d\mu_g
\right|
\leq CL \sqrt{\epsilon}$$
for all $\epsilon\in (0,\varepsilon)$.
But since the contraction of \eqref{initio} with $\omega\otimes \omega$  tells us that
$$ \langle \nabla^*\nabla  (fW^+) , \omega\otimes \omega \rangle  + \frac{s}{2}f W^+(\omega , \omega ) 
 - 6 f|W^+(\omega)|^2+ 2 f|W^+|^2 |\omega |^2 =0$$
 on $(X,g)$, it therefore follows that 
 $$\left| \int_{X_\epsilon} \Big[ \langle W^+ , \nabla^*\nabla (\omega\otimes \omega )\rangle  + \frac{s}{2} W^+(\omega , \omega ) 
 - 6 |W^+(\omega)|^2+ 2 |W^+|^2 |\omega |^2 \Big] f~ d\mu_g
 \right| \leq CL \sqrt{\epsilon}$$
 for all small $\epsilon$.  Thus 
 $$
 \lim_{\epsilon \searrow 0}  \int_{X_\epsilon} \Big[ \langle W^+ , \nabla^*\nabla (\omega\otimes \omega )\rangle  + \frac{s}{2} W^+(\omega , \omega ) 
 - 6 |W^+(\omega)|^2+ 2 |W^+|^2 |\omega |^2 \Big] f~ d\mu_g =0.
 $$
 
To prove the claim, it therefore suffices to show that both integrands in  \eqref{party} are absolutely integrable, and so  belong to  $L^1$.
To see this, first notice that 
\begin{eqnarray*}
\int_X \left|\langle \nabla^*\nabla (fW^+) , \omega \otimes \omega \rangle_g\right| d\mu_g&\leq&
2\int_X \left| \nabla^*\nabla (fW^+) \right|_g d\mu_g\\
&=&2 \int_X f^2 \left| [ \nabla \cdot  \nabla (fW^+)]_g \right|_h  f^{-4} d\mu_h\\
&\leq & 2 \int_X  \left|  \nabla^* \nabla (fW^+) \right|_h  d\mu_h \\
&& \qquad 
+ 8 \int_X  \left|  \nabla (\beta \otimes fW^+) \right|_h  d\mu_h \\
&& \qquad\qquad   + 
10 \int_X  \left|  \beta \otimes \nabla ( fW^+) \right|_h  d\mu_h \\
&& \qquad\qquad  \qquad   + 
40 \int_X  \left|  \beta \otimes \beta \otimes fW^+ \right|_h  d\mu_h \\
&\leq & 2\int_X  f \left|  \nabla^* \nabla W^+\right|_h  d\mu_h \\
&& \qquad 
+  22 \int_X  |\nabla f|_h |\nabla W^+|_h  d\mu_h \\
&& \qquad\qquad  \qquad   + 
10 \int_X  |\nabla\nabla f|_h |W^+|_h  d\mu_h \\
&& \qquad\qquad\qquad  \qquad   + 
50 \int_X  f^{-1} |\nabla f|^2_h |W^+|_h  d\mu_h \\
&\leq & C_3 \int_M \Big[  |\omega|^{-1/2}_h+ |\nabla |\omega|^{-1/2}_h|_h \\
&& \qquad  \qquad + |\omega|^{1/2}_h |\nabla |\omega|^{-1/2}_h|_h^2 + 
 |\nabla \nabla |\omega|^{-1/2}_h|_h\Big] d\mu_h\\
 &\leq & C_3 \int_M \Big[  |\omega|^{-1/2}_h+ \frac{1}{2}|\omega|^{-3/2}_h |\nabla \omega|_h \\
&& \qquad +
\frac{23}{4}               |\omega|^{-5/2}_h |\nabla \omega |^2_h 
+2 |\omega|^{-3/2}_h |\nabla \nabla \omega |_h\Big] d\mu_h\\
&\leq & C_4 \int_M  |\omega|^{-5/2}_h  d\mu_h\\
&<  & \infty , 
\end{eqnarray*}
where $C_3$ is a positive constant depending on $(M,h)$,
$C_4$ is a positive constant depending on
$(M,h, \omega)$, and where, as in the remainder of the paper,  
$\nabla$   denotes the Levi-Civita connection $\hat{\nabla}$ of $h$ when its relation to $h$ is clearly indicated by  a subscript. 
Here,  in the last step, we have used the fact that $|\omega|^{-5/2}$ is comparable,   near $Z=M-X$,  to 
  $\mathsf{r}^{-5/2}$ on $B^3 \times S^1$, where 
$\mathsf{r}=|\vec{x}|$ is the distance from the origin  in the unit ball $B^3 \subset \RR^3$,
and therefore has finite integral because
$$\int_{B^3} |\vec{x}|^{-5/2} dx^1 \wedge dx^2\wedge dx^3 = 4\pi   \int_0^1 \mathsf{r}^{-5/2}\mathsf{r}^{2}d \mathsf{r}=4\pi  \Big[2\sqrt{\mathsf{r}}\Big]_0^1  < \infty.$$
In much the same way, 
\begin{eqnarray*}
\int_X \left|\langle fW^+ , \nabla^*\nabla (\omega \otimes \omega )\rangle_g\right| d\mu_g \negthickspace&\leq&
2\sqrt{2} \int_X   f|W^+|_g  | \nabla^*\nabla \omega|_g  d\mu_g \\
&& \qquad \qquad   + 
2 \int_X   f|W^+|_g  | \nabla \omega|_g^2  d\mu_g 
\\
&\leq& 2^{3/2} \int_X f^3 |W^+|_h f^4\Big[ |\nabla^*\nabla \omega |_h + 2 |\nabla  \beta |_h  |\omega |_h \\
&& \qquad \qquad \qquad    + 4 |\beta|_h |\nabla \omega|_h + |\beta |^2 |\omega|_h 
\Big] f^{-4} d\mu_h \\
&& \qquad + 2 \int_X f^3 |W^+|_h f^6 \Big[ |\nabla \omega |_h^2  + 4 | \beta |_h  |\omega|_h  |\nabla \omega |_h
\\&&  \qquad \qquad  \qquad  \qquad\qquad  \qquad    + 4 | \beta |_h^2  |\omega|_h^2
\Big] f^{-4} d\mu_h \\
&\leq&  2^{3/2} \int_X  |W^+|_h \Big[ f^3 |\nabla^*\nabla \omega |_h + 2 f^2 | \nabla( f^{-1}\nabla f ) |_h  |\omega |_h \\
&& \qquad \qquad  \qquad      + 4 f^2 |\nabla f|_h |\nabla \omega|_h + f |\nabla f |^2 |\omega|_h 
\Big] d\mu_h \\
&& \qquad + 2 \int_X  |W^+|_h \Big[ f^5  |\nabla \omega |_h^2  + 4 f^4 | \nabla f |_h  |\omega|_h  |\nabla \omega |_h
\\&&  \qquad \qquad  \qquad  \qquad   \qquad  \qquad + 4f^3  | \nabla f |_h^2  |\omega|_h^2
\Big]  d\mu_h \\
&\leq& C_5 \int_X  \Big[ |\omega|_h^{-3/2} |\nabla^*\nabla \omega |_h +    |\omega|_h^{-2} |\nabla \omega |_h^2   +   \\
&& \qquad \qquad  \qquad     
 |\omega|_h^{-2} |\nabla \nabla \omega |_h^2 +   |\omega|_h^{-5/2} |\nabla \omega |_h^2\Big]  d\mu_h \\
&\leq & C_6 \int_M  |\omega|_h^{-5/2} d\mu_h 
\\ & < & \infty 
\end{eqnarray*}
where $C_5$ and $C_6$ are positive constants depending, respectively, on $(M,h)$ and  $(M,h, \omega)$.
Thus,  the integrands in \eqref{party} both belong to $L^1$, and our previous computation therefore shows that
their integrals on $X$ are not  merely both  defined, but are actually equal. 
\end{proof}
 
 Since we are thus entitled to carry out the desired integration-by-parts in the near-symplectic case, 
 \eqref{initio} therefore implies an interesting integral Weitzenb\"ock formula when  $h$ also satisfies $\delta W^+=0$.

\pagebreak 

\begin{prop} 
\label{steady}
Let $\omega$ be a near-symplectic self-dual harmonic $2$-form on  a compact oriented
Riemannian $4$-manifold $(M,h)$ with $\delta W^+=0$. Let $X=M-Z$ be the complement of the zero set
$Z$ of $\omega$,  set $f=2^{1/4}|\omega|_h^{-1/2}$ on $X$, and let $g=f^{-2}h$ be the almost-K\"ahler metric on 
$(X,\omega)$ obtained by conformally rescaling $h$ to make $|\omega|_g\equiv \sqrt{2}$. Then $g$ satisfies 
$$
\int_X \Big[ \langle W^+ , \nabla^*\nabla (\omega\otimes \omega )\rangle  + \frac{s}{2} W^+(\omega , \omega ) 
 - 6 |W^+(\omega)|^2+ 2 |W^+|^2 |\omega |^2 \Big] f~ d\mu_g =0,
$$ 
 both as a  Lebesgue integral and as an improper Riemann integral. 
\end{prop}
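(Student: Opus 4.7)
My plan is to contract the pointwise Weitzenb\"ock identity \eqref{initio} with $\omega \otimes \omega$, integrate the resulting identity over $X$ with respect to $d\mu_g$, and then invoke Lemma \ref{primed} to move $\nabla^*\nabla$ off of $fW^+$ and onto $\omega \otimes \omega$.

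First, I would contract \eqref{initio} with $\omega \otimes \omega$. Using that $W^+$ is self-adjoint on $\Lambda^+$, so that $\langle (W^+\circ W^+)(\omega),\omega\rangle = |W^+(\omega)|^2$, and that $\langle I(\omega),\omega\rangle = |\omega|^2$, this yields the pointwise identity on $X$
$$\langle \nabla^*\nabla(fW^+), \omega\otimes\omega\rangle + \tfrac{s}{2}fW^+(\omega,\omega) - 6f|W^+(\omega)|^2 + 2f|W^+|^2|\omega|^2 = 0.$$

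Second, I would integrate this identity termwise over $X$. Lemma \ref{primed} immediately provides both the $L^1$-integrability of the first summand and the integration-by-parts identity that converts $\int_X \langle \nabla^*\nabla(fW^+), \omega\otimes\omega\rangle\, d\mu_g$ into $\int_X f\langle W^+, \nabla^*\nabla(\omega\otimes\omega)\rangle\, d\mu_g$. For the three remaining summands I would verify $L^1$-integrability by the same sort of conformal bookkeeping used in the proof of Lemma \ref{primed}. Specifically, using the rescalings $|\omega|_g = f^2|\omega|_h$, $|W^+|_g = f^2|W^+|_h$, $d\mu_g = f^{-4}d\mu_h$, together with the conformal scalar-curvature formula $s_g = f^2 s_h + 6f\Delta_h f - 12|\nabla f|_h^2$, each of the three absolute values $|\tfrac{s}{2}fW^+(\omega,\omega)_g|$, $|6f|W^+(\omega)|_g^2|$, and $|2f|W^+|_g^2|\omega|_g^2|$, when converted to an $h$-quantity times $d\mu_h$, is dominated by a constant multiple of $|\omega|_h^{-5/2}$ near $Z$. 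This bound is integrable on a tubular neighborhood of $Z\approx \sqcup S^1$ by the very calculation $\int_0^1 r^{-5/2}\cdot r^2\,dr < \infty$ already appearing at the end of the proof of Lemma \ref{primed}.

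Third, once integrability is in hand, the result follows by substituting the integration-by-parts identity from Lemma \ref{primed} into the integrated pointwise identity. The equality of the Lebesgue integral with the improper Riemann integral defined by exhaustion through $X_\epsilon$ then holds automatically by dominated convergence. The main technical obstacle will be the scalar-curvature term, since $s_g$ is a priori singular near $Z$, blowing up like $|\omega|_h^{-3}$; one must balance this against the decay of $fW^+(\omega,\omega)_g$ using the explicit transformation rule $W^+(\omega,\omega)_g = f^6 W^+(\omega,\omega)_h$ already exploited in the proof of Lemma \ref{ready}, which leaves a remainder of order $|\omega|_h^{1/2} d\mu_h$ that just barely absorbs the factor of $s_g$ into an integrable expression.
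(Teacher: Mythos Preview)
Your proposal is correct and follows essentially the same route as the paper: contract \eqref{initio} with $\omega\otimes\omega$, integrate over $X$, and invoke Lemma~\ref{primed} to move $\nabla^*\nabla$ onto $\omega\otimes\omega$. The only difference is that the paper avoids your direct term-by-term $L^1$ estimates for the three lower-order summands by observing that, since their pointwise sum equals $-\langle \nabla^*\nabla(fW^+),\omega\otimes\omega\rangle$, which is already $L^1$ by Lemma~\ref{primed}, the sum of the remaining three terms is automatically $L^1$; your conformal bookkeeping reaches the same conclusion but with more labor than is strictly necessary.
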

\begin{proof}
Contraction of \eqref{initio} with $\omega\otimes \omega$  tells us that
$$ \langle \nabla^*\nabla  (fW^+) , \omega\otimes \omega \rangle  + \frac{s}{2}f W^+(\omega , \omega ) 
 - 6 f|W^+(\omega)|^2+ 2 f|W^+|^2 |\omega |^2 =0$$
 on $(X,g)$, so integration certainly tells us that
 $$\negthickspace 
 \int_X \Big[  \langle \nabla^*\nabla  (fW^+) , \omega\otimes \omega \rangle  + \frac{s}{2}f W^+(\omega , \omega ) 
 - 6 f|W^+(\omega)|^2+ 2 f|W^+|^2 |\omega |^2  \Big] d\mu_g =0.$$
 However, because  the first term is $L^1$, equation \eqref{initio} tells us that the same is also  true of  the sum of the remaining terms, and 
  Lemma \ref{primed} therefore allows us to rewrite the above expression as 
 $$
 \int_X \Big[ \langle fW^+ , \nabla^*\nabla (\omega\otimes \omega )\rangle  + f\frac{s}{2} W^+(\omega , \omega ) 
 - 6 f|W^+(\omega)|^2+ 2 f |W^+|^2 |\omega |^2 \Big]  d\mu_g =0.
 $$
 Collecting the common of factor of $f$ now yields the desired result. 
\end{proof}

\section{Some Almost-K\"ahler Geometry} 
\label{demilo}

When  an oriented Riemannian manifold $(M,h)$  with $\delta W^+=0$ carries a near-symplectic self-dual harmonic $2$-form $\omega$, 
we saw in Proposition \ref{steady} that, if we set 
  $f=2^{1/4}|\omega|_h^{-1/2}$ on the open 
set $X$ where 
$\omega\neq 0$,  
 the conformally related almost-K\"ahler metric  $g=f^{-2}h$ then   satisfies an integral Weitzenb\"ock formula on $X$. 
 In order to exploit this effectively, we will  next need a universal   identity
previously pointed out in \cite{lebcake}: 

\begin{lem}
\label{redstone}
Any $4$-dimensional almost-K\"ahler manifold satisfies 
$$\langle W^+ , \nabla^*\nabla (\omega\otimes \omega )\rangle= [W^+(\omega , \omega )]^2 + 4 |W^+(\omega )|^2 - s W^+ (\omega , \omega )$$
at every point. 
\end{lem}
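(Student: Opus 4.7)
The plan is to expand the left-hand side via the Leibniz rule, feed in the standard Weitzenb\"ock formula for self-dual harmonic $2$-forms, and then dispatch the remaining ``cross term'' $\sum_k W^+(\nabla_k\omega,\nabla_k\omega)$ by exploiting an isotropy identity peculiar to the almost-K\"ahler setting.

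First I would apply the Leibniz rule to obtain
$$\nabla^*\nabla(\omega\otimes\omega) \;=\; (\nabla^*\nabla\omega)\otimes\omega \;+\; \omega\otimes(\nabla^*\nabla\omega) \;-\; 2\,\nabla^k\omega\otimes\nabla_k\omega,$$
and, using that $W^+$ is symmetric in its two $\Lambda^+$-slots, pair with $W^+$ to get
$$\langle W^+, \nabla^*\nabla(\omega\otimes\omega)\rangle \;=\; 2\,W^+(\nabla^*\nabla\omega,\omega) \;-\; 2\sum_k W^+(\nabla_k\omega,\nabla_k\omega).$$
Because $\omega$ is $g$-closed, $g$-self-dual, and of constant norm $|\omega|_g=\sqrt 2$, it is $g$-harmonic, and the classical Weitzenb\"ock formula for self-dual harmonic $2$-forms supplies
$$\nabla^*\nabla\omega \;=\; 2W^+(\omega) - \tfrac{s}{3}\omega.$$
Substituting turns the first term into $4|W^+(\omega)|^2 - (2s/3)\,W^+(\omega,\omega)$, using the self-adjointness of $W^+$ on $\Lambda^+$. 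Meanwhile, applying the standard Bochner identity $\tfrac{1}{2}\nabla^*\nabla|\omega|^2 = \langle\nabla^*\nabla\omega,\omega\rangle - |\nabla\omega|^2$ to the constant function $|\omega|_g^2=2$ yields $|\nabla\omega|^2 = 2W^+(\omega,\omega) - 2s/3$, which I will save for the end.

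The heart of the argument is then the pointwise identity
$$\sum_k W^+(\nabla_k\omega,\nabla_k\omega) \;=\; -\,\frac{|\nabla\omega|^2}{4}\,W^+(\omega,\omega).$$
Constancy of $|\omega|^2$ forces each $\nabla_k\omega$ to be pointwise orthogonal to $\omega$, so the linear map $X\mapsto\nabla_X\omega$ actually takes values in the rank-$2$ oriented subbundle $\omega^\perp\subset\Lambda^+$. Both $TM$ and $\omega^\perp$ carry natural almost-complex structures---namely $J$ and the canonical rotation $\mathcal{J}$ on an oriented rank-$2$ bundle---and the almost-K\"ahler identity (the Gray-type consequence of $d\omega=0$ together with $\nabla J$ being $J$-anti-commuting) forces $\nabla\omega\colon T_pM\to\omega^\perp$ to be complex (anti)linear with respect to these structures. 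A short formal argument then shows that the symmetric endomorphism $(\nabla\omega)(\nabla\omega)^*$ of the complex line $\omega^\perp$ is simultaneously complex-linear and self-adjoint, hence a real scalar; specifically, it equals $(|\nabla\omega|^2/2)$ times the identity on $\omega^\perp$. Contracting with the trace-free $W^+$, and noting that $\langle W^+,\Pi_{\omega^\perp}\rangle = -W^+(\omega,\omega)/2$ by trace-freeness together with $W^+(\omega/\sqrt 2,\omega/\sqrt 2) = W^+(\omega,\omega)/2$, then produces the displayed identity.

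Assembling the pieces gives
$$\langle W^+, \nabla^*\nabla(\omega\otimes\omega)\rangle \;=\; 4|W^+(\omega)|^2 \;-\; \tfrac{2s}{3}W^+(\omega,\omega) \;+\; \tfrac{|\nabla\omega|^2}{2}W^+(\omega,\omega),$$
and feeding in $|\nabla\omega|^2 = 2W^+(\omega,\omega) - 2s/3$ from the Bochner step collapses the right-hand side to exactly $[W^+(\omega,\omega)]^2 + 4|W^+(\omega)|^2 - sW^+(\omega,\omega)$. The main obstacle in this plan is the isotropy identity of the third paragraph: the rest of the argument is essentially bookkeeping with the two standard Weitzenb\"ock and Bochner formulas, but the isotropy genuinely requires the almost-K\"ahler hypothesis on $\omega$ and fails for a generic self-dual harmonic $2$-form of constant norm.
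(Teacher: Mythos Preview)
Your proposal is correct and follows essentially the same route as the paper: Leibniz expansion, the Weitzenb\"ock formula $\nabla^*\nabla\omega = 2W^+(\omega) - \tfrac{s}{3}\omega$, the identity $\tfrac{1}{2}|\nabla\omega|^2 = W^+(\omega,\omega) - \tfrac{s}{3}$, and the key isotropy step $\sum_k W^+(\nabla_k\omega,\nabla_k\omega) = -\tfrac14|\nabla\omega|^2\,W^+(\omega,\omega)$. The only cosmetic difference is that the paper establishes the isotropy by writing $\nabla\omega = \alpha\otimes\varphi + \bar\alpha\otimes\bar\varphi$ with $\alpha\in\Lambda^{1,0}$ and computing $(\nabla_e\omega)\circledast(\nabla^e\omega)$ directly, whereas you phrase the same fact as complex (anti)linearity of $\nabla\omega\colon TM\to\omega^\perp$ forcing $(\nabla\omega)(\nabla\omega)^*$ to be scalar.
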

\begin{proof} First notice our the oriented Riemannian $4$-manifold $(X,g)$ satisfies
$$\Lambda^+\otimes \CC = \CC\omega \oplus K \oplus \overline{K},$$
where 
$K= \Lambda^{2,0}_J$ is the canonical line bundle of the almost-complex structure $J$ defined by $\omega = g(J\cdot , \cdot )$.
Locally choosing a unit section $\varphi$ of $K$, we thus have 
$$\nabla \omega = \alpha \otimes \varphi + \bar{\alpha} \otimes \bar{\varphi}$$
for a unique $1$-form $\alpha \in \Lambda^{1,0}_J$, since  $\nabla_{[a}\omega_{bc]}=0$ and $\omega^{bc}\nabla_a \omega_{bc}=  0$. 
If 
$$\circledast: \Lambda^+\times \Lambda^+\to \odot^2_0\Lambda^+$$
  denotes the symmetric trace-free product, we therefore have 
$$(\nabla_e \omega ) \circledast  (\nabla^e\omega )= 2|\alpha |^2 \varphi \circledast\bar{\varphi}  = -\frac{1}{4} |\nabla \omega|^2\omega \circledast \omega$$
and we thus deduce that 
\begin{eqnarray*} 
\langle W^+ , \nabla^*\nabla (\omega\otimes \omega )\rangle
& = &2W^+(\omega , \nabla^*\nabla \omega  ) - 2W^+(\nabla_e \omega , \nabla^e \omega ) \\
& = & 2 W^+(\omega , \nabla^*\nabla \omega  )  + \frac{1}{2}|\nabla \omega |^2 W^+(\omega , \omega )\\
& = & 2 W^+(\omega , 2W^+ ( \omega ) - \frac{s}{3} \omega  ) + 
\Big[ W^+(\omega , \omega ) -\frac{s}{3}\Big] W^+(\omega , \omega )\\
& = & -\frac{2}{3}s W^+ (\omega , \omega ) + 4 |W^+(\omega )|^2 
+ \Big[ W^+(\omega , \omega ) -\frac{s}{3}\Big] W^+(\omega , \omega )\\
& = &  [W^+(\omega , \omega )]^2 + 4 |W^+(\omega )|^2 - s W^+ (\omega , \omega )
 \end{eqnarray*}
 where we have used the Weitzenb\"ock formula 
 $$0= \nabla^* \nabla \omega - 2 W^+(\omega  ) + \frac{s}{3}\omega$$
for the harmonic self-dual $2$-form $\omega$, as well as the associated key identity 
\begin{equation}
\label{cornerstone}
\frac{1}{2} |\nabla \omega |^2 =  W^+(\omega , \omega ) - \frac{s}{3}
\end{equation}
resulting from the fact that  $|\omega |^2\equiv 2$.
 \end{proof}

In conjunction with Proposition \ref{steady}, this  now yields the following:

\begin{thm} 
\label{go}
Let $\omega$ be a near-symplectic self-dual harmonic $2$-form on  a compact oriented
Riemannian $4$-manifold $(M,h)$ with $\delta W^+=0$. Let $X=M-Z$ be the complement of the zero set
$Z$ of $\omega$,  set $f=2^{1/4}|\omega|_h^{-1/2}$ on $X$, and let $g=f^{-2}h$ be the almost-K\"ahler metric on 
$(X,\omega)$ obtained by conformally rescaling $h$ to make $|\omega|_g\equiv \sqrt{2}$. Then the almost-K\"ahler metric $g$ satisfies 
\begin{equation}
\label{walla}
\int_X \left[ 8  \left(|W^+|^2 - \frac{1}{2} |W^+(\omega )^\perp|^2\right)-sW^+(\omega , \omega ) \right] f~d\mu_g =0, 
\end{equation}
where $s$ is the scalar curvature of $g$, and where $W^+(\omega )^\perp$ denotes the orthogonal projection of $W^+(\omega)$ to the orthogonal complement of 
$\omega \in \Lambda^+$. Moreover,  the integrand  belongs to $L^1$, so the statement holds 
whether left-hand-side is is construed as  a Lebesgue  integral or  as an improper Riemann integral. 
\end{thm}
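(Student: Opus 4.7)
The plan is to combine Proposition~\ref{steady} with the pointwise identity Lemma~\ref{redstone} and then simplify algebraically, using the normalization $|\omega|_g^2\equiv 2$ and an orthogonal decomposition of $W^+(\omega)$ along $\omega$.

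First I would substitute the identity of Lemma~\ref{redstone},
\[
\langle W^+,\nabla^*\nabla(\omega\otimes\omega)\rangle=[W^+(\omega,\omega)]^2+4|W^+(\omega)|^2-sW^+(\omega,\omega),
\]
into the integrand appearing in Proposition~\ref{steady}. Using $|\omega|_g^2\equiv 2$ to turn $2|W^+|^2|\omega|^2$ into $4|W^+|^2$, the integrand becomes
\[
[W^+(\omega,\omega)]^2 - 2|W^+(\omega)|^2 - \tfrac{s}{2}W^+(\omega,\omega) + 4|W^+|^2,
\]
and multiplying through by $2$ gives $2[W^+(\omega,\omega)]^2 - 4|W^+(\omega)|^2 - sW^+(\omega,\omega) + 8|W^+|^2$.

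Next I would eliminate the two unwanted terms via the orthogonal decomposition of $W^+(\omega)\in\Lambda^+$ with respect to $\omega$. Since $|\omega|_g^2=2$, one has
\[
W^+(\omega)=\tfrac{1}{2}W^+(\omega,\omega)\,\omega + W^+(\omega)^\perp,
\]
so $|W^+(\omega)|^2 = \tfrac{1}{2}[W^+(\omega,\omega)]^2 + |W^+(\omega)^\perp|^2$. This identity converts $2[W^+(\omega,\omega)]^2 - 4|W^+(\omega)|^2$ into $-4|W^+(\omega)^\perp|^2$, and the integrand reduces exactly to
\[
8\!\left(|W^+|^2-\tfrac{1}{2}|W^+(\omega)^\perp|^2\right)-sW^+(\omega,\omega),
\]
as required for \eqref{walla}.

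For the $L^1$ claim, I would note that each of the terms produced in the derivation is pointwise dominated, on $X$, by a constant multiple (depending on $(M,h,\omega)$) of $|\omega|_h^{-5/2}$ times $d\mu_h$, because $f=2^{1/4}|\omega|_h^{-1/2}$, $d\mu_g=f^{-4}d\mu_h$, and $|W^+|_h$, $|\hat\nabla\omega|_h$, $|\hat\nabla\hat\nabla\omega|_h$ are bounded on all of $M$. The integrability of $|\omega|_h^{-5/2}$ on a tubular neighborhood of $Z$ has already been verified in the proof of Lemma~\ref{primed}, so $L^1$-ness holds and the left-hand side may equally well be read as a Lebesgue or an improper Riemann integral. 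There is no substantive obstacle here; the argument is a straightforward algebraic substitution, and the only care required is to track the normalization $|\omega|_g^2\equiv 2$ so that the coefficients come out correctly.
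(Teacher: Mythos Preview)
Your proposal is correct and follows essentially the same route as the paper: substitute Lemma~\ref{redstone} into Proposition~\ref{steady}, use $|\omega|_g^2\equiv 2$, and then invoke the identity $|W^+(\omega)|^2=\tfrac{1}{2}[W^+(\omega,\omega)]^2+|W^+(\omega)^\perp|^2$ before multiplying by $2$. The only cosmetic difference is in the $L^1$ justification: the paper simply observes that the integrand of \eqref{walla} is, pointwise, twice the integrand of Proposition~\ref{steady} (which was already shown to be $L^1$), whereas you re-derive integrability by bounding each term against $|\omega|_h^{-5/2}\,d\mu_h$; both arguments are valid, though the paper's is shorter.
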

\begin{proof}
Combining Proposition \ref{steady} with Lemma \ref{redstone}, 
  we    have 
\begin{eqnarray*} 0&=& 
 \int_X \Big[ \langle W^+ , \nabla^*\nabla (\omega\otimes \omega )\rangle  + \frac{s}{2} W^+(\omega , \omega ) 
 - 6 |W^+(\omega)|^2+ 2 |W^+|^2 |\omega |^2 \Big] f~ d\mu  \\
&=&  \int_X \Big[ \Big([W^+(\omega , \omega )]^2 + 4 |W^+(\omega )|^2 - s W^+ (\omega , \omega )\Big)
 \\&& \hphantom{\int_X \Big[ \Big([W^+(\omega , \omega )]^2 + 4 |W^+ } 
 + \frac{s}{2} W^+(\omega , \omega ) 
 - 6 |W^+(\omega)|^2+ 4 |W^+|^2 \Big]  f~d\mu  
 \\
&=&  \int_X \Big[ [W^+(\omega , \omega )]^2    - \frac{s}{2} W^+(\omega , \omega ) 
 - 2 |W^+(\omega)|^2+ 4 |W^+|^2 \Big]  f~d\mu ~. 
 \end{eqnarray*}
 Since  $|W^+(\omega)^\perp|^2 = |W^+(\omega)|^2 - \frac{1}{2}[W^+(\omega, \omega)]^2$, 
multiplication  by $2$ thus yields the desired formula \eqref{walla}. Moreover,  this calculation shows
that the integrand is the sum of two $L^1$ functions, and is therefore itself $L^1$ by the triangle inequality. 
\end{proof}

Next, we prove  a refinement of a point-wise inequality used in  \cite{lebcake}: 
\begin{lem}
\label{dominion}
Any $4$-dimensional almost-K\"ahler manifold satisfies 
$$
|W^+|^2 - \frac{1}{2} |W^+(\omega )^\perp|^2\geq \frac{3}{8} \left[ W^+ (\omega , \omega ) \right]^2+\frac{1}{2} |W^+(\omega )^\perp|^2
$$
at every point.
\end{lem}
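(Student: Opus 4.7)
The plan is to reduce the claim to a pointwise linear algebra inequality on the fibers of $\Lambda^+$, and then to prove that inequality by a trace-freeness argument.

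At a point, $W^+$ acts as a symmetric, trace-free endomorphism of the $3$-dimensional Euclidean space $\Lambda^+$. First I would exploit the almost-K\"ahler normalization $|\omega|^2 = 2$ to choose a pointwise orthonormal basis $\{e_1, e_2, e_3\}$ of $\Lambda^+$ with $e_1 = \omega/\sqrt{2}$, and represent $W^+$ by the symmetric matrix $M_{ij} = \langle W^+(e_i), e_j\rangle$ satisfying $M_{11}+M_{22}+M_{33}=0$. In this basis one reads off directly
\begin{align*}
W^+(\omega,\omega) &= 2M_{11}, \qquad |W^+|^2 = \sum_{i,j} M_{ij}^2,\\
|W^+(\omega)^\perp|^2 &= 2(M_{12}^2+M_{13}^2).
\end{align*}
After substituting these expressions, the claimed inequality collapses to
$$
|W^+|^2 \;\geq\; \tfrac{3}{2}M_{11}^2 + 2(M_{12}^2+M_{13}^2),
$$
which is the form I would aim for.

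The main step is then purely algebraic. Expanding $|W^+|^2 = M_{11}^2+M_{22}^2+M_{33}^2+2(M_{12}^2+M_{13}^2+M_{23}^2)$ and applying the elementary bound
$$
M_{22}^2 + M_{33}^2 \;\geq\; \tfrac{1}{2}(M_{22}+M_{33})^2 \;=\; \tfrac{1}{2}M_{11}^2,
$$
where the last equality uses $\operatorname{tr} M = 0$, gives
$$
|W^+|^2 \;\geq\; \tfrac{3}{2}M_{11}^2 + 2(M_{12}^2+M_{13}^2)+2M_{23}^2,
$$
and dropping the non-negative term $2M_{23}^2$ finishes the proof.

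This argument is purely pointwise and does not actually use the almost-K\"ahler structure beyond the normalization $|\omega|^2=2$; in fact it holds for any self-dual harmonic (or even arbitrary) $2$-form of constant squared norm $2$. There is no substantive obstacle here, but one should be mindful of two small bookkeeping issues: keeping the factors of $\sqrt{2}$ correct when passing between $\omega$ and the unit vector $e_1$, and noting that equality in the final bound characterizes the case $M_{22}=M_{33}$ and $M_{23}=0$, i.e.\ $\omega$ is an eigenvector of $W^+$ and the remaining two eigenvalues coincide, a fact that will likely be useful later when analyzing the equality cases of the integral identity in Theorem \ref{go}.
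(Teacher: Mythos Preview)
Your proof is correct and follows essentially the same approach as the paper: reduce to a pointwise linear-algebra inequality for a symmetric trace-free $3\times 3$ matrix in an orthonormal basis with $e_1=\omega/\sqrt{2}$, and then use trace-freeness to obtain the $\tfrac{3}{2}M_{11}^2$ lower bound. The only cosmetic differences are that the paper further rotates the basis so that $W^+(\omega)^\perp\propto e_2$ (thus $M_{13}=0$), and phrases the trace-freeness estimate as the completion of squares $M_{11}^2+M_{22}^2+(M_{11}+M_{22})^2=\tfrac{3}{2}M_{11}^2+2(\tfrac{M_{11}}{2}+M_{22})^2$ rather than your equivalent $M_{22}^2+M_{33}^2\geq\tfrac{1}{2}(M_{22}+M_{33})^2$.
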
 
\begin{proof}
If $A=[A_{jk}]$ is any symmetric  trace-free $3\times 3$ matrix, the fact that $A_{33}= -(A_{11}+A_{22})$ implies that  
$$\sum_{jk} A_{jk}^2 \geq 2A_{21}^2 +  A_{11}^2+A_{22}^2+(A_{11}+A_{22})^2 = 2A_{21}^2 + \frac{3}{2}A_{11}^2 + 2 (\frac{A_{11}}{2}+A_{22})^2$$
and we therefore conclude that 
$$|A|^2 \geq  2 A_{21}^2 + \frac{3}{2} A_{11}^2 .$$

If we now let $A$ represent $W^+:\Lambda^+\to \Lambda^+$ with respect to an 
orthogonal basis $\mathfrak{e}_1, \mathfrak{e}_2, \mathfrak{e}_3$ for $\Lambda^+$ such that  $\omega = \sqrt{2}\mathfrak{e}_1$ and 
$W^+(\omega )^\perp \propto \mathfrak{e}_2$, this inequality becomes
$$
|W^+|^2 \geq |W^+(\omega)^\perp|^2 + \frac{3}{8} \left[ W^+ (\omega , \omega )\right]^2
$$
and  subtracting $\frac{1}{2}|W^+(\omega)^\perp|^2$ from both sides therefore proves the claim. 
 \end{proof} 

This now yields  a  key inequality: 

\begin{lem}
Let $(M,h)$, $\omega$, $X$, $g$ and $f$ be as in Theorem \ref{go}. Then the almost-K\"ahler metric $g=f^{-2}h$ satisfies 
\begin{equation}
0\geq \int_{X}  \left[  W^+ (\omega , \omega ) |\nabla \omega |^2 + \frac{8}{3}  |W^+(\omega )^\perp|^2 \right]f~d\mu_g ,
\label{punch}
\end{equation}
in the sense   the Lebesgue integral on the right is well-defined  and belongs to $[-\infty ,0]$.
\end{lem}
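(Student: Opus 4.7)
The plan is to combine three ingredients already in hand: the integral identity \eqref{walla} of Theorem \ref{go}, the pointwise bound of Lemma \ref{dominion}, and the identity $\frac{1}{2}|\nabla\omega|^2 = W^+(\omega,\omega)-\frac{s}{3}$ (equation \eqref{cornerstone}) that surfaced inside the proof of Lemma \ref{redstone}.

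First, I would feed Lemma \ref{dominion} into \eqref{walla}. Multiplying the lemma's inequality by $8$ gives the pointwise estimate
$$8\Bigl(|W^+|^2-\tfrac12|W^+(\omega)^\perp|^2\Bigr)\;\geq\; 3[W^+(\omega,\omega)]^2+4|W^+(\omega)^\perp|^2,$$
so after subtracting $sW^+(\omega,\omega)$ and integrating, \eqref{walla} yields
$$0\;\geq\;\int_X\Bigl[3[W^+(\omega,\omega)]^2+4|W^+(\omega)^\perp|^2-sW^+(\omega,\omega)\Bigr]f\,d\mu_g.$$

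Second, I would substitute for $s$ using \eqref{cornerstone}, which on $(X,g)$ gives $s=3W^+(\omega,\omega)-\tfrac32|\nabla\omega|^2$. This collapses the scalar-curvature term into a Kähler-free expression, namely
$$3[W^+(\omega,\omega)]^2-sW^+(\omega,\omega)\;=\;\tfrac32\,W^+(\omega,\omega)|\nabla\omega|^2.$$
After multiplying through by $\tfrac23$, the resulting integrand is exactly $W^+(\omega,\omega)|\nabla\omega|^2+\tfrac83|W^+(\omega)^\perp|^2$, which is the statement of \eqref{punch}.

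Third, the measure-theoretic claim. The integrand in \eqref{walla} lies in $L^1(X, f\,d\mu_g)$ by Theorem \ref{go}, and the new integrand is dominated pointwise from above by a constant multiple of it (the step from Lemma \ref{dominion} only replaces the upper quantity by a smaller one). Hence the positive part of the new integrand is in $L^1$, so its Lebesgue integral is well-defined as an element of $[-\infty,\infty)$; the inequality then locates it in $[-\infty,0]$, which matches the statement.

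There is no real obstacle here beyond careful bookkeeping: all the analytic work was done in Theorem \ref{go}, the pointwise algebraic inequality is Lemma \ref{dominion}, and the elimination of $s$ is a one-line application of \eqref{cornerstone}. The only subtle point is not claiming $L^1$-integrability of the full integrand in \eqref{punch}, since $|\nabla\omega|^2$ need not decay fast enough near $Z$; domination from above by an $L^1$ function is all that is required for the Lebesgue integral to make sense in $[-\infty,0]$.
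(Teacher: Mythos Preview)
Your proposal is correct and follows essentially the same route as the paper: combine \eqref{walla} with Lemma \ref{dominion} to obtain the intermediate inequality $0\geq\int_X[3[W^+(\omega,\omega)]^2-sW^+(\omega,\omega)+4|W^+(\omega)^\perp|^2]f\,d\mu_g$, then eliminate $s$ via \eqref{cornerstone} and rescale. The only cosmetic difference is that the paper factors $3[W^+(\omega,\omega)]^2-sW^+(\omega,\omega)$ as $3W^+(\omega,\omega)\bigl(W^+(\omega,\omega)-\tfrac{s}{3}\bigr)$ before invoking \eqref{cornerstone}, whereas you substitute for $s$ directly; your measure-theoretic justification (positive part dominated by the $L^1$ integrand of \eqref{walla}) is exactly what is needed.
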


\begin{proof}
Theorem \ref{go}  tells us  that 
$$0=\int_{X} \left[ 8  \left(|W^+|^2 - \frac{1}{2} |W^+(\omega )^\perp|^2\right)-sW^+(\omega , \omega ) \right] f~d\mu_g$$
and that the  positive and negative parts of the integrand are both $L^1$ functions. 
The pointwise inequality of integrands  provided  by Lemma \ref{dominion} therefore impiies that 
$$
0\geq 
\int_{X} \left[  3  \left[ W^+ (\omega , \omega ) \right]^2 -sW^+(\omega , \omega )+4 |W^+(\omega )^\perp|^2\right] f~d\mu_g
$$
in the Lebesgue sense. After dividing by $3$, we can then re-express this as 
 \begin{equation}
\label{slap}
0\geq\int_{X} \left[ W^+(\omega , \omega ) \left( W^+ (\omega , \omega ) -\frac{s}{3}\right) + \frac{4}{3}  |W^+(\omega )^\perp|^2 \right]~ f~d\mu_g .
\end{equation}
However, \eqref{cornerstone} tells us that  $W^+(\omega , \omega ) - \frac{s}{3}=\frac{1}{2} |\nabla \omega |^2$ for any almost-K\"ahler $4$-manifold.
Making this substitution in \eqref{slap} and then multiplying by $2$ thus yields the desired inequality  \eqref{punch}.
\end{proof} 

In the special case where $(M,h,\omega)$ satisfies the conformally invariant condition $W^+(\omega , \omega ) \geq 0$,
we thus obtain the following: 

\begin{prop} 
\label{stay}
Let $(M,h)$ be a compact oriented
Riemannian $4$-manifold that satisfies  $\delta W^+=0$, and suppose that 
$\omega$ is  a near-symplectic self-dual harmonic $2$-form on   $(M,h)$ that satisfies  $W^+(\omega , \omega) \geq 0$. 
Let $X$, $g$, and $f$ be as in Theorem \ref{go}. Then the almost-K\"ahler manifold $(X,g,\omega)$ satisfies 
\begin{equation}
\int_{X}  \left[  W^+ (\omega , \omega ) |\nabla \omega |^2 + \frac{8}{3}  |W^+(\omega )^\perp|^2 \right]f~d\mu_g  =0,
\label{judy}
\end{equation}
both as a Lebesgue and as an improper Riemann integral.  
\end{prop}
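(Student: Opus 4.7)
My plan is to derive \eqref{judy} directly from the inequality \eqref{punch} by exploiting the hypothesis $W^+(\omega , \omega )\geq 0$. The previous lemma already guarantees that the Lebesgue integral appearing in \eqref{punch} is well-defined as an element of $[-\infty , 0]$. Under the additional sign hypothesis, the first summand of the integrand, $W^+(\omega , \omega)\,|\nabla \omega |^2\,f$, is a product of three non-negative factors (recall that $f>0$ on $X$), while the second summand $\tfrac{8}{3}|W^+(\omega)^\perp|^2\,f$ is trivially non-negative. The integrand is therefore pointwise non-negative on $X$, which independently forces the Lebesgue integral to lie in $[0,\infty]$. Intersecting the two ranges $[-\infty,0]$ and $[0,\infty]$ singles out the value $0$, which establishes \eqref{judy} as a Lebesgue integral.

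To upgrade the statement to an improper Riemann integral, I would use the exhaustion of $X$ by the regions $X_\epsilon=\{|\omega|_h\geq\epsilon\}$ introduced in \S\ref{diana}, each of which is a compact submanifold with smooth boundary for almost every $\epsilon\in (0,1)$. Since the integrand is continuous and non-negative on $X$, monotone convergence applied to the increasing family of truncated integrands gives
$$\lim_{\epsilon\searrow 0}\int_{X_\epsilon}\left[W^+(\omega , \omega )\,|\nabla \omega |^2+\tfrac{8}{3}|W^+(\omega )^\perp|^2\right]f\,d\mu_g \;=\; 0.$$
Because this limit exists and is finite, the improper Riemann integral is well-defined and coincides with the Lebesgue value, completing the argument.

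I do not expect any serious obstacle: the proposition is essentially the observation that the inequality of the previous lemma, which lacked any a priori lower bound, must collapse to an equality once the two summands of the integrand are known to be individually non-negative. The only subtlety worth flagging is that \eqref{punch} permits the Lebesgue integral to be $-\infty$ in principle, so the sign hypothesis must be used first to rule this out before one can conclude equality with zero.
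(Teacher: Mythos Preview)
Your argument is correct and follows essentially the same route as the paper: the paper likewise observes that the hypothesis $W^+(\omega,\omega)\geq 0$ makes the integrand in \eqref{punch} non-negative, forcing the Lebesgue integral to vanish, and then notes that this in turn yields the improper Riemann statement. Your monotone-convergence justification for the improper Riemann part is a perfectly acceptable variant of the paper's remark that the integrand is $L^1$.
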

\begin{proof}
The added assumption that $W^+(\omega , \omega) \geq 0$ obviously implies 
$$\int_{X}  \left[  W^+ (\omega , \omega ) |\nabla \omega |^2 + \frac{8}{3}  |W^+(\omega )^\perp|^2 \right]f~d\mu_g \geq 0$$
as an extended real number, because the integrand is now  non-negative. But in conjunction with  \eqref{punch}, this  immediately that 
$$
\int_{X}  \left[  W^+ (\omega , \omega ) |\nabla \omega |^2 + \frac{8}{3}  |W^+(\omega )^\perp|^2 \right]f~d\mu_g = 0
$$
as a Lebesgue integral. Since the integrand is also moreover $L^1$, the integral also necessarily vanishes 
as an improper Riemann integral. 
\end{proof}

This  very  strong statement  now has even stronger     consequences: 

\begin{prop} 
\label{claret}
Let $M$, $h$, $\omega$, $X$, $g$ and $f$ be as in Proposition \ref{stay}. 
 Then either $g$ is a K\"ahler metric on $X$ whose scalar curvature is  given by $s=\mathbf{c}/f$ for some  constant $\mathbf{c} >  0$, 
 or else $g$ satisfies $W^+\equiv 0$, and so is an anti-self-dual metric. \end{prop}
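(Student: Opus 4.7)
The plan will be to first extract pointwise consequences from Proposition~\ref{stay}, then plug them back into the stronger identity of Theorem~\ref{go} to produce either an anti-self-dual alternative or an open K\"ahler region $U\subset X$, and finally to globalize local K\"ahler-ness on $U$ to all of $X$ and derive the formula $s = \mathbf{c}/f$.

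First, I observe that the integrand in \eqref{judy} is a sum of two non-negative terms (using $W^+(\omega,\omega)\geq 0$) weighted by the positive function $f$, yet integrates to zero; both summands must therefore vanish pointwise on $X$:
$$W^+(\omega)^\perp \equiv 0, \qquad W^+(\omega,\omega)\,|\nabla\omega|^2 \equiv 0.$$
The first identity makes $\omega$ a pointwise eigenvector of $W^+$. Substituting the almost-K\"ahler identity \eqref{cornerstone} into the second gives $W^+(\omega,\omega)[W^+(\omega,\omega) - s/3] \equiv 0$, so at each point of $X$ either $W^+(\omega,\omega) = 0$ (and hence $W^+(\omega) = 0$ as well) or $W^+(\omega,\omega) = s/3$ with $\nabla\omega = 0$, i.e.\ $g$ is K\"ahler at that point.

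Now set $U := \{W^+(\omega,\omega) > 0\}$, an open subset of $X$ on which $g$ is K\"ahler with $s = 3W^+(\omega,\omega) > 0$. Feeding $W^+(\omega)^\perp = 0$ into \eqref{walla} collapses the integrand to $8|W^+|^2 - s\,W^+(\omega,\omega)$; on $U$, the classical K\"ahler identity $|W^+|^2 = s^2/24$ together with $W^+(\omega,\omega) = s/3$ makes this vanish, while on $X\setminus U$ it becomes the non-negative $8|W^+|^2$. The vanishing integral therefore forces $W^+ \equiv 0$ on $X\setminus U$. If $U = \emptyset$, then $W^+ \equiv 0$ on $X$ and, by continuity, on $M$, which is the anti-self-dual alternative.

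In the remaining case $U \neq \emptyset$, I claim $U = X$. On any K\"ahler 4-manifold one has $W^+ = (s/12)\,T$, where $T := 3(\omega\otimes\omega)/|\omega|^2 - I^+$ is a parallel $(0,4)$-tensor depending only on $g$ and $\omega$ ($I^+$ being the identity on $\Lambda^+$ regarded as a $(0,4)$-tensor). The conformal change $h = f^2 g$ converts the hypothesis $\delta_h W^+_h = 0$ into $\delta_g(fW^+) = 0$, and parallelism of $T$ reduces this on any connected component $U_0$ of $U$ to the pointwise equation
$$\nabla^a(fs)\,T_{abcd} \equiv 0.$$
A short linear-algebra check in an orthonormal K\"ahler frame shows that the contraction $v\mapsto v^a T_{abcd}$ is injective on $T^*M$, so $d(fs)\equiv 0$ and $fs \equiv \mathbf{c}_0$ for a positive constant on $U_0$. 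If some $p\in\partial U_0\cap X$ existed, continuity from $U_0$ would force $s(p) = 3W^+(\omega,\omega)(p) = 0$, contradicting $s(p) = \mathbf{c}_0/f(p) > 0$; hence each component of $U$ is both open and closed in $X$. Since $Z$ is a disjoint union of circles in the 4-manifold $M$, the complement $X = M\setminus Z$ is connected, so $U = X$. Therefore $g$ is K\"ahler on $X$ with $fs$ equal to a single positive constant $\mathbf{c}$, yielding $s = \mathbf{c}/f$ as claimed.

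The main technical obstacle will be the injectivity of the contraction $v\mapsto v^a T_{abcd}$: this is what upgrades the conformally transformed Codazzi-type condition $\delta_g(fW^+)=0$ into the strong pointwise equation $d(fs)=0$, rather than a weaker extremal-K\"ahler-type equation. Connectedness of $X$ then spreads the resulting local constancy of $fs$ globally.
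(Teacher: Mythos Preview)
Your proof is correct and follows essentially the same approach as the paper's: both extract the pointwise vanishing $W^+(\omega)^\perp\equiv 0$ and $W^+(\omega,\omega)\,|\nabla\omega|^2\equiv 0$ from \eqref{judy}, use $\delta_g(fW^+)=0$ on the K\"ahler locus to obtain $d(fs)=0$, invoke \eqref{walla} to force $W^+\equiv 0$ off that locus, and appeal to connectedness of $X$. The paper merely orders these steps differently---it first shows $fW^+(\omega,\omega)$ is globally constant on $X$ (by patching $d[fW^+(\omega,\omega)]=0$ on $\overline{\mathscr{V}}$ with the trivial vanishing on $X\setminus\overline{\mathscr{V}}$) and then splits into $\mathbf{c}>0$ versus $\mathbf{c}=0$, and it derives $d(fs)=0$ by contracting $\delta(fW^+)=0$ directly with $\omega^{ab}\omega^{cd}$ rather than via your parallel-tensor injectivity argument.
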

\begin{proof} 
Since  $f > 0$ by construction, and since  $W^+(\omega , \omega ) \geq 0$ by assumption, both terms in the integrand of
\eqref{judy} must vanish identically. We thus have 
\begin{equation}
\label{arrival}
W^+(\omega , \omega ) |\nabla \omega |^2 = 0 \qquad \mbox{and} \qquad W^+(\omega )^\perp = 0
\end{equation}
at every point of  $X$. 
In particular, 
$\nabla \omega=0$ wherever $W^+(\omega , \omega )\neq 0$.
If $\mathscr{V}\subset X$ is the open subset where $W^+(\omega , \omega )\neq 0$, the restriction of $g$ to $\mathscr{V}$  is therefore   K\"ahler.
On the other hand,  since $h=f^2g$ satisfies  $\delta W^+=0$,   conformal invariance of this equation tells us that 
$g$ satisfies $\delta (fW^+)=0$, as previously noted. On $(\mathscr{V},g)$ we therefore have
\begin{eqnarray*}
0&=&\omega^{ab}\omega^{cd} \nabla^e(fW^+_{ebcd})=  \nabla^e(fW^+_{ebcd}\omega^{ab}\omega^{cd})\\&= &\nabla^e(f \frac{s}{3}\omega_{eb}\omega^{ab})=
\frac{1}{3} \nabla^e(f s~\delta_e^b)= \frac{1}{3} \nabla_b(f s)= \nabla_b[fW^+(\omega , \omega)],
\end{eqnarray*}
since at each point of  any K\"ahler manifold of real dimension $4$, 
the K\"ahler form $\omega$ is an eigenvector of $W^+: \Lambda^+\to \Lambda^+$,
with eigenvalue  one-sixth of the scalar curvature $s$.
 This shows  that  $d[fW^+(\omega, \omega )]=0$ on $\mathscr{V}$, 
 and therefore,  by continuity,  on the closure $\overline{\mathscr{V}}$ of $\mathscr{V}$, too. On the other hand, since our definition of $\mathscr{V}$ guarantees that 
 $fW^+(\omega, \omega ) \equiv 0$ on the open set $X-\overline{\mathscr{V}}$,  we also have 
$d[fW^+(\omega, \omega )]=0$ on  $X-\overline{\mathscr{V}}$. It follows that $d[fW^+(\omega, \omega )]=0$ on all of $X$. Since $X$ is connected, and since $fW^+(\omega, \omega )\geq 0$, 
we therefore  conclude
  that $fW^+(\omega, \omega )=\mathbf{c}/3$ for
some non-negative constant $\mathbf{c}\geq 0$. 

If $\mathbf{c}>0$, $\mathscr{V}=X$, and it follows that $(X,g,\omega)$ is a K\"ahler manifold, with 
$$s=3 W^+(\omega, \omega ) = \frac{\mathbf{c}}{f}.$$

Otherwise,  $\mathbf{c}=0$, and we have $W^+(\omega, \omega ) \equiv 0$. On the other hand,  \eqref{arrival} also tells us  that  
$W^+(\omega )^\perp \equiv 0$
on $X$. Substituting these two facts  into \eqref{walla} then yields
$$
\int_{X}  | W^+ |^2 f~d\mu_g  =0.
$$
Thus, when $\mathbf{c}=0$, we conclude that $W^+\equiv 0$, and $g$ is therefore anti-self-dual in this remaining case, exactly as claimed.
\end{proof}

Sharpening these conclusions now supplies  our mainspring  result: 

\begin{thm} \label{clarify}
Let  $(M,h)$ be a compact oriented
Riemannian $4$-manifold with $\delta W^+=0$ that admits a near-symplectic self-dual harmonic $2$-form $\omega$ 
such that 
$$W^+(\omega , \omega ) \geq 0.$$
Then either $h$ satisfies $W^+\equiv 0$, and so is anti-self-dual, or else $W^+ (\omega , \omega )$ is   
everywhere positive,  and 
 $M$ admits a global K\"ahler metric $g$  with scalar curvature $s > 0$  
 such that $h=s^{-2}g$. 
\end{thm}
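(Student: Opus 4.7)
My plan is to promote the local statement of Proposition~\ref{claret} on $X=M-Z$ to a global statement on $M$. Proposition~\ref{claret} provides two alternatives: either (A) $W^+_g\equiv 0$ on $X$, or (B) $g$ is K\"ahler on $X$ with $s_g=\mathbf{c}/f$ for some constant $\mathbf{c}>0$. Case (A) is immediate: by the conformal invariance of $W^+$, $W^+_h\equiv 0$ on $X$, and since $Z\approx\sqcup S^1$ has codimension~$3$, the open dense set $X$ determines the continuous tensor $W^+_h$, which must therefore vanish on all of $M$.

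The substantive work lies in Case~(B), where the goal is to show $Z=\emptyset$. My strategy is to extract a semilinear elliptic equation on $X$ for the conformal factor $\phi:=f^{-1}=|\omega|_h^{1/2}/2^{1/4}$, upgrade it to a weak equation on all of $M$, and then contradict the transversality of $\omega$ along $Z$. Applying the standard conformal-change formula for scalar curvature in dimension four to $g=\phi^2 h$ gives $6\nabla^*\nabla\phi+s_h\phi=s_g\phi^3$ on $X$, and substituting $s_g=\mathbf{c}\phi$ yields
\begin{equation}
\label{yamcake}
6\nabla^*\nabla \phi + s_h\phi = \mathbf{c}\phi^4 \qquad \text{on } X.
\end{equation}
One observes that $\phi\in C^0(M)\cap H^1(M)$: continuity is clear since $\phi^4=|\omega|_h^2/2$ is smooth and nonnegative, while the $H^1$ bound reduces to integrability of $|d\phi|_h^2\sim|\vec x|^{-1}$ near $Z$ in transverse coordinates, against $\int_{B^3}|\vec x|^{-1}d^3\vec x<\infty$.

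To promote \eqref{yamcake} across $Z$, test it against any $\eta\in C^\infty_c(M)$ and integrate over the exhausting domain $X_\epsilon=\{|\omega|_h\geq\epsilon\}$; the resulting boundary term is dominated by $\|\eta\|_\infty\cdot\sup_{\partial X_\epsilon}|d\phi|_h\cdot \Vol^{(3)}(\partial X_\epsilon,h)=O(\epsilon^{-1/2}\cdot\epsilon^2)=O(\epsilon^{3/2})$, and so vanishes as $\epsilon\to 0$. Thus $\phi$ is a weak solution of \eqref{yamcake} on all of $M$, and standard elliptic bootstrap for this bounded semilinear equation yields $\phi\in C^{1,\alpha}(M)$. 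But the near-symplectic hypothesis forces $|\omega|_h$ to vanish to exactly first order along each circle of $Z$, so $\phi\sim|\vec x|^{1/2}$ in transverse coordinates --- continuous but not $C^1$ at its zero set. Hence $Z=\emptyset$; the function $\phi$ is then smooth and positive on $M$, the almost-K\"ahler metric $g=\phi^2h$ extends to a smooth K\"ahler metric on $M$ with $s_g>0$ everywhere (and K\"ahler form $\omega$), and the constant rescaling $g\mapsto\mathbf{c}^{2/3}g$ arranges the identity $h=s_g^{-2}g$. The main obstacle I anticipate is justifying the weak-formulation step cleanly; but this is handled by essentially the same near-symplectic volume-counting $\epsilon^{3/2}$ estimate that drove the proof of Lemma~\ref{primed}.
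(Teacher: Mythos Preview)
Your argument is correct, and it reaches $Z=\varnothing$ by a genuinely different route from the paper's.  In Case~(B), the paper instead exploits the K\"ahler-algebraic identity $W^+(\omega,\omega)=\sqrt{2/3}\,|W^+|\,|\omega|^2$ to convert $s_g=\mathbf{c}/f$ into the scalar relation $|W^+|_h=\mathbf{a}\,|\omega|_h^{3/2}$ on all of $M$; differentiating, together with the near-symplectic lower bound on $|d|\omega|_h|$, yields $|\nabla W^+|_h\geq \mathbf{A}\,|\omega|_h^{1/2}$ near $Z$, while the assumed $C^4$-regularity of $h$ forces $\nabla W^+$ to be Lipschitz and to vanish along $Z$, so that $|\nabla W^+|_h\leq \mathbf{B}\,|\omega|_h$.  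These two bounds are incompatible unless $Z=\varnothing$.  Your approach sidesteps this curvature algebra entirely: you pass through the conformal scalar-curvature equation \eqref{yamcake} for $\phi=f^{-1}$, promote it across $Z$ by the same $O(\epsilon^{3/2})$ boundary estimate that drove Lemma~\ref{primed}, and then let $L^p$ elliptic regularity do the work, producing $\phi\in C^{1,\alpha}(M)$ in contradiction with the $|\omega|_h^{1/2}$ profile near $Z$.  The paper's argument is more self-contained within the geometric framework already established (no appeal to $L^p$ theory, only the $C^4$ hypothesis on $h$), whereas yours is more portable and makes no use of the special eigenstructure of $W^+$ on K\"ahler $4$-manifolds---only the standard Yamabe transformation law.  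Both arguments hinge on exactly the same near-symplectic input: the codimension-$3$ volume decay $\Vol^{(3)}(\partial X_\epsilon,h)\sim\epsilon^2$ and the first-order vanishing of $|\omega|_h$ along $Z$.
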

\begin{proof}
If $(X,g)$ satisfies $W^+\equiv 0$, the conformal invariance of this condition implies that $(X,h)$ satisfies
$W^+\equiv 0$, too. But since $X\subset M$ is dense, it then follows by continuity that $h$ satisfies $W^+\equiv 0$ on all 
of $M$. Thus, $(M,h)$ must be a compact  anti-self-dual manifold in this case. 

Otherwise,  $W^+\not\equiv 0$, and Proposition \ref{claret}  then  guarantees that  
 $g=f^{-2}h$ must  be a    K\"ahler metric on $X=M-Z$,  with K\"ahler form $\omega$ and 
$$3 W^+(\omega, \omega ) =s = \mathbf{c}f^{-1}$$
for some positive constant $\mathbf{c}$. However, since $h=f^2 g$, we also have  
$$[W^+(\omega, \omega ) ]_h = f^{-6}[W^+(\omega, \omega ) ]_g,$$
and it therefore follows that 
$$[W^+(\omega, \omega ) ]_h = \frac{\mathbf{c}}{3}~ f^{-7}.$$
But since $f=2^{1/4}|\omega|_h^{-1/2}$ by construction, this means that 
\begin{equation}
\label{buffo}
[W^+(\omega, \omega ) ]_h= \mathbf{b} ~|\omega|_h^{7/2}
\end{equation}
on $X$, where $\mathbf{b}  = \sqrt[4]{2} \mathbf{c}/12$ is a 
 a positive constant. 
 However, since $g$ is K\"ahler, with positive scalar curvature and K\"ahler form $\omega$, 
 $W^+$ has a repeated  negative eigenvalue at every point of $X$, and $\omega$ everywhere  belongs to the positive eigenspace. This implies  that 
 $$W^+(\omega , \omega ) = \sqrt{\frac{2}{3}} |W^+| |\omega|^2$$
 at every point of $X$, whether  for $g$ or for $h$. Thus \eqref{buffo} implies that 
 \begin{equation}
\label{comedia}
|W^+|_h = \mathbf{a} |\omega|_h^{3/2}
\end{equation}
everywhere on  $X$, where $\mathbf{a}=  \sqrt{\frac{3}{2}}\mathbf{b}$ is another positive constant. However, since $X\subset M$ is dense, and because the two sides
are both continuous functions, it then follows that \eqref{comedia} actually
holds on all of $M$. Now notice that this implies  that $|W^+|$ is everywhere differentiable; moreover, 
$W^+$ must vanish to first order along $Z$; thus, 
$\nabla W^+ =0$ at every point of  $Z$, where $\nabla$ denotes the Levi-Civita connection of $h$. Next, notice that \eqref{comedia} also implies  that 
$$|d|W^+|_h|_h = \frac{3}{2} \mathbf{a} |\omega|_h^{1/2} |d|\omega|_h|_h$$
on $X=M-Z$. Since the near-symplectic of $\omega$ moreover guarantees that 
$|d|\omega|_h|$ is bounded away from zero near $Z$, we therefore have
$$|d|W^+|_h|_h \geq \mathbf{A} |\omega|_h^{1/2}$$
on some neighborhood $\mathscr{U}$ of $Z$, where 
$\mathbf{A} :=  \frac{3}{2} \mathbf{a}~\inf_{\mathscr{U}-Z} |d|\omega|_h|_h$ is another positive constant. 
By the Kato inequality, we therefore have
$$|\nabla W^+|_h \geq  \mathbf{A} |\omega|_h^{1/2}$$
on $\mathscr{U}$.
 But since $h$ has been assumed throughout  to be a $C^4$ metric, $\nabla W^+$ is a differentiable tensor field,
and we have moreover previously  observed that this field vanishes along $Z$. It thus follows that 
$|\nabla W^+|_h$  is a Lipschitz function that vanishes along $Z$. But since $\omega$ is near-symplectic,  $|\omega|_h$ is commensurate with the distance
from  $Z$ in a  small enough neighborhood $\mathscr{U}\supset Z$,  and we 
must therefore have $\mathbf{B} |\omega|_h > |\nabla W^+|_h$ on a sufficiently small neighborhood $\mathscr{U}$ of $Z$,  for some positive constant $\mathbf{B}$.
But this then says that 
$$\mathbf{B}  |\omega|_h > \mathbf{A} |\omega|_h^{1/2}$$
on $\mathscr{U}$, and so implies
that 
$$ |\omega|_h > \frac{\mathbf{A}^2}{\mathbf{B}^2} > 0$$
on $\mathscr{U}-Z$. But since $X-(\mathscr{U}-Z)=M-\mathscr{U}$ is compact, and since $\omega\neq 0$ on $X$,   this implies 
 that $|\omega_h|$ is uniformly bounded away from zero on all of $X$. But since $X$ is dense in $M$, it therefore follows by continuity  that $|\omega|_h$
 is bounded away from zero on all of $M$. 
Since $Z$ is by definition  the zero set of $\omega$, we are therefore forced to conclude that $Z=\varnothing$.

Thus,   $g$ is a globally-defined K\"ahler metric  with scalar curvature $s> 0$
such that  $h=f^{2}g=\mathbf{c}^2s^{-2}g$ on all of  $M$. By now replacing $\omega$ with $\mathbf{c}^{-2/3}\omega$ 
and thus replacing 
$g$ with $\mathbf{c}^{-2/3}g$, we can 
 moreover now arrange for $h$ to simply be given by $s^{-2}g$, as promised. 
\end{proof}

\medskip 

This tells us quite a bit about the  $4$-manifolds that   carry metrics $h$ of the type covered  by Theorem \ref{clarify}.
Indeed \cite{bes,derd}, if $(M,J,g)$ is a compact K\"ahler surface of scalar curvature $s> 0$, then  
$h=s^{-2}g$ is a metric on $M$ with $\delta W^+=0$,  and with $W^+(\omega , \omega ) > 0$ 
for     the K\"ahler form $\omega$ of $g$.  On the other hand, 
if a  compact complex surface $(M,J)$  admits 
K\"ahler metrics $g$  with $s> 0$, it   is necessarily rational or ruled \cite{yauruled}. Conversely, any rational or ruled surface 
has arbitrarily small deformations that admit such metrics \cite{hitpos,mhsung}. Up to oriented diffeomorphism, we can therefore give a complete
list of the $4$-manifolds that admit solutions of this first type: they are $\CP_2$, $(\Sigma^2 \times S^2) \# k \overline{\CP}_2$, 
 and $\Sigma^2 \rtimes S^2$, where  $\Sigma$ is any compact orientable surface, $k$ is any non-negative integer, and 
$\Sigma^2 \rtimes S^2$ is the non-trivial oriented $2$-sphere bundle over $\Sigma$.  The moduli space of solutions on any  of these manifolds 
is moreover infinite-dimensional.

 The other class of   solutions allowed by Theorem \ref{clarify} is rather different, both because the moduli spaces
 of solutions are always finite dimensional, and because the near-symplectic self-dual harmonic $2$-form $\omega$
 is allowed to  have non-empty zero set. Of course, a vast menagerie of smooth compact oriented $4$-manifolds with $b_+\neq 0$ 
 is known to admit anti-self-dual metrics \cite{lebsing2,tasd}, but little is known about when 
 their self-dual harmonic $2$-forms $\omega$ are near-symplectic. 
  There certainly  are many examples with nowhere-zero $\omega$
 that are not conformally K\"ahler \cite{inyoungagag}, but there are also related explicit families  \cite{bisleb} with $b_+=1$ where the
 self-dual harmonic $2$-form $\omega$ transmutes  from being nowhere-zero to having non-empty zero locus.
For the latter  explicit anti-self-dual manifolds, it seems likely that the self-dual harmonic $2$-form $\omega$ is usually  
near-symplectic, but this is equivalent to  the non-degeneracy of all  critical points for a preferred harmonic function on  a 
quasi-Fuchsian hyperbolic $3$-manifold associated with the solution. Perhaps some interested reader will decide that
this  tractable-looking open problem   merits   careful  investigation!

\section{The Main Theorems}
\label{demedici}

With the results of \S \ref{demilo} in hand, we are now ready to prove our main theorems.

\medskip

\noindent 
{\sf Proof of Theorem \ref{diamond}.}  If $(M,h)$ is an  oriented  $4$-dimensional Einstein manifold, 
the second Bianchi identity implies that  $\delta W^+=0$.
If $(M,h)$ is  moreover compact, connected, and  admits a near-symplectic  self-dual 
harmonic $2$-form $\omega$ such that $W^+(\omega , \omega )\geq 0$, the conclusions of
Theorem \ref{clarify} then apply. Thus, if $W^+(\omega , \omega )> 0$ at some point, we know that $W^+ \not\equiv 0$, and 
 Theorem \ref{clarify}  then tells us  that $W^+(\omega , \omega )> 0$ everywhere, and  $h=s^{-2}g$
for some globally-defined K\"ahler metric $g$ on $M$ with scalar curvature $s > 0$. 
However, any $4$-dimensional Einstein metric is Bach-flat, and, because  this is a conformally 
invariant condition, the K\"ahler metric $g$ must therefore be Bach-flat, too. In particular, 
this implies \cite{chenlebweb,derd} that $g$ is an extremal K\"ahler metric. Moreover,
one can also show \cite{lebhem} that the complex structure associated with any such $g$ has $c_1>0$,
and it therefore follows  that  $M$ is necessarily diffeomorphic to a del Pezzo surface. Conversely,
each   del Pezzo diffeotype carries \cite{chenlebweb,sunspot,s,tian,ty}  an Einstein metric $h$ which can be written as $s^{-2}g$
for a suitable  extremal K\"ahler metric $g$ with scalar curvature $s > 0$. In fact, 
 $h$ is actually K\"ahler-Einstein in most cases, the only exceptions being when   $M$ is diffeomorphic to  $\CP_2\#  \overline{\CP}_2$ or 
$\CP_2\# 2 \overline{\CP}_2$.
 \hfill $\Box$

\medskip 

For each del Pezzo diffeotype,  the moduli space of all  Einstein metrics $h$ 
with  $W^+(\omega , \omega ) > 0$ is actually  connected  \cite{lebcake}. Moreover, it follows from    \cite[Theorem A]{lebuniq} 
and a modicum of   elementary Seiberg-Witten theory \cite[Theorem 3]{lmo}  that, 
for each del Pezzo $M$, this moduli space exactly coincides with the moduli space of 
all conformally K\"ahler, Einstein metrics.

\bigskip

\noindent 
{\sf Proof of Theorem \ref{pearl}.}  If $(M^4,h)$ is a  compact oriented $\lambda  \geq 0$ Einstein manifold that  carries a near-symplectic 
self-dual harmonic  
$\omega$ with $W^+(\omega , \omega )\geq 0$, then Theorem \ref{clarify} tells us that 
either $W^+(\omega , \omega ) > 0$ everywhere, or else $W^+\equiv 0$. 
Since the former case is covered by Theorem \ref{diamond}, we may therefore assume that 
$W^+\equiv 0$. However, by the Weitzenb\"ock  formula for the Hodge Laplacian, 
 the non-trivial self-dual harmonic $2$-form $\omega$ satisfies 
 $$0 = \nabla^*\nabla \omega - 2 W^+(\omega ) + \frac{s}{3} \omega$$
 and since $W^+=0$ and $s = 4\lambda \geq 0$ in our case, taking the inner product with $\omega$
 and integrating yields
 $$0 = \int_M \left[ |\nabla \omega |^2 + \frac{4\lambda}{3} |\omega |^2\right] d\mu_h.$$
 We therefore conclude that $\nabla \omega =0$ and $\lambda =0$, so that  $(M^4,h)$ is necessarily  Ricci-flat
 and  K\"ahler. Thus, after multiplying $\omega$ by a positive constant  if necessary in order to give it constant length $|\omega |_h \equiv \sqrt{2}$, 
 we see that $(M,h)$ carries an integrable, metric compatible  almost-complex structure $J$  such that
 $\omega = h(J\cdot , \cdot )$. Moreover,  since the K\"ahler metric $h$  is Ricci-flat, the 
 canonical line bundle $K$ of $(M,J)$ is flat, and $c_1(M,J)$ must therefore be a torsion class.
 The  Kodaira classification of complex surfaces  \cite{bpv,GH}  therefore tells us that $(M,J)$ must be a
   $K3$ surface, an
 Enriques surface, an Abelian surface, or a hyper-elliptic surface. 
 Conversely, Yau's solution of the Calabi conjecture \cite{yauma} tells us that each complex surface
 of one of these types carries a unique Ricci-flat K\"ahler metric in each K\"ahler class,
 and every such Calabi-Yau metric satisfies $W^+\equiv 0$.
 \hfill $\Box$
 
 \medskip
It is worth pointing out that the moduli space of Ricci-flat K\"ahler metrics is connected. Indeed, since the K\"ahler cone
is contractible for each complex structure, Yau's theorem reduces this statement to the known 
fact \cite{bpv} that all the $c_1^\RR=0$ complex structures on these $4$-manifolds are swept out by a 
single connected family.

\bigskip

Finally, let us observe that the near-symplectic hypothesis  is absolutely essential for Theorem \ref{diamond}:
\medskip 

\noindent 
{\sf Proof of Theorem \ref{opal}.} Let $(M,J,h)$ be a K\"ahler-Einstein metric
with $\lambda < 0$ on a compact complex surface $(M,J)$ with $p_g(M):=h^{2,0}(M)\neq 0$.
(For example, we could take $(M,J)$ to be a smooth quintic hypersurface in $\CP_3$,
so that $c_1(M) < 0$ and  $p_g(M)=4$, and let $h$ be the K\"ahler-Einstein metric
whose existence is guaranteed by the Aubin-Yau theorem \cite{aubin,yau}.)
Now recall that  the self-dual Weyl curvature  $W^+ : \Lambda^+\to \Lambda^+$ of any K\"ahler surface $(M^4,J,g)$ takes the form 
$$
\left[\begin{array}{ccc} -\frac{s}{12} &  &  \\ & -\frac{s}{12} &  \\ &  &  \frac{s}{6}\end{array}\right]
$$
 in any orthonormal  basis $\mathfrak{e}_1$, $\mathfrak{e}_2$, $\mathfrak{e}_3$
 for $\Lambda^+$ in  which $\mathfrak{e}_3$ is a multiple of the K\"ahler form, where $s$ is the scalar curvature. 
 Rather than taking $\omega$ to be the K\"ahler form, we now  instead take $\omega = \Re e ~(\varphi)$
 for some holomorphic $2$-form $\varphi\not\equiv 0$, on $(M,J)$. Of course,  the existence of such a $\varphi$  is 
 guaranteed by our assumption that $h^{2,0}\neq 0$. Notice that $\varphi$ is automatically self-dual  and harmonic
 as a consequence of standard K\"ahler identities, and that the same is therefore automatically true of its real part $\omega$.

However, since  $\omega \in \Re e ~\Lambda^{2,0}$ 
 is everywhere point-wise orthogonal to the K\"ahler form, 
 we now see that 
 $$W^+(\omega, \omega ) =  -\frac{s}{12}|\omega|^2 =  \frac{|\lambda |}{3}|\omega|^2\geq 0,$$
 since the Einstein constant  $\lambda$ of $h$ is assumed to be negative,
 Moreover, since $\omega \not\equiv 0$, this non-negative expression  is somewhere
 positive. On the other hand, the canonical line bundle of $(M,J)$ is non-trivial, because $c_1 (K)= - c_1 > 0$,
 so $\varphi$, and therefore $\omega$, must vanish along some non-empty holomorphic curve $\Sigma \subset M$. 
Thus, $W^+(\omega, \omega )$ vanishes somewhere, and  the conclusion of Theorem \ref{diamond}
therefore  fails for this class of examples. \hfill $\Box$

\bigskip

Of course, in light of  counter-examples like those detailed  in the proof of Theorem \ref{opal}, it is important to explain exactly where  the proof of Theorem \ref{diamond} 
 breaks down when $\omega$ is not  near-symplectic. In fact, 
the key failure occurs at the very beginning of our chain of reasoning, when Lemma \ref{primed} is deduced  from Lemma \ref{ready}.
Recall that Lemma \ref{ready} tells us that the boundary terms arising from integration by parts
have size 
$\sim \epsilon^{-3/2} \Vol^{(3)} ( \partial X_\epsilon , h)$,
where $\partial X_\epsilon$ is the hypersurface where $|\omega|_h = \epsilon$. 
In the near-symplectic case, $\Vol^{(3)} ( \partial X_\epsilon , h)\sim \epsilon^{2}$, so 
the boundary terms  are no worse than  $\epsilon^{1/2}$, and so vanish in the limit  as $\epsilon \to 0$. 
By contrast, in the above examples,  the zero locus $Z=\Sigma$ of $\omega$ has real codimension $2$, and we instead have
 $\Vol^{(3)} ( \partial X_\epsilon , h)\sim \epsilon$. This means  the boundary terms could in principle blow up as fast as   
 $\epsilon^{-1/2}$, and   so, in particular,  can then no longer   
be  expected to become   negligeable  as $\epsilon$ tends to zero.

\pagebreak

%

\vfill 

\noindent 
{\sc Department of Mathematics, Stony Brook University, Stony Brook, NY 11794-3651 USA} 

\medskip 

\noindent 
{e-mail:} claude@math.stonybrook.edu

\bigskip 

\noindent 
{\sc Keywords:} Einstein metric,  Weyl curvature,  Del Pezzo surface, K3 surface,  harmonic form, K\"ahler, almost-K\"ahler, nearly symplectic.

\bigskip 

\noindent 
{\sc MSC classification:}  53C25 (Primary),  14J26,  14J28, 32J15, 53C55, 53D05.

\end{document}